\newtheorem{theorem}{Theorem}
\newtheorem{conjecture}[theorem]{Conjecture}
\newtheorem{lemma}[theorem]{Lemma}
\newtheorem{proposition}[theorem]{Proposition}
\newtheorem{definition}{Definition}[section]
\newtheorem{remark}{Remark}[section]
\title{Asymptotics for Magic Squares of Primes}
\author{Carlos Vinuesa}
\date{\today}
\begin{document}

\maketitle

\begin{abstract}
Based on the work of Green, Tao and Ziegler, we give asymptotics when $N \to \infty$ for the number of $n \times n$ magic squares with their entries being prime numbers in $[0,N]$. For every $n \ge 3$ we give appropriate systems of linear forms (or equivalently basis) describing all $n \times n$ magic squares with integer entries and we calculate the complexity of these systems in the Green and Tao sense. We compute the precise asymptotics for the cases $n=3$ (complexity 3) and $n=4$ (complexity 1), and the given algorithm works for $n \ge 5$ (complexity 1). Finally, we show that the asymptotics are exactly the same if we impose that all the entries of the magic squares have to be different.
\end{abstract}

\section{Introduction}\label{intro}

In this article we will be interested in the asymptotic number of magic squares with their entries being prime numbers in $[0,N]$ when $N \to \infty$.

\bigskip

The generalization of an important conjecture by Hardy and Littlewood, says (Conjecture 1.4 in \cite{GT1}):
\begin{conjecture}\label{conjeHL}
(Generalised Hardy-Littlewood conjecture) Let $N$, $d$, $t$, $L$ be positive integers, and let $\Psi:\mathbb{Z}^d \rightarrow \mathbb{Z}^t$, $\Psi = (\psi_1, \ldots, \psi_t)$, be a system of affine-linear forms (none of the $\psi_i$ is constant and no two of them are rational multiples of each other) with size $||\Psi||_N := \sum_{i=1}^t \sum_{j=1}^d |\dot{\psi}_i(e_j)| + \sum_{i=1}^t \left| \frac{\psi_i(0)}{N}\right| \le L$, where each $\psi_i = \dot{\psi_i} + \psi_i(0)$ and $e_1, \ldots, e_d$ is the standard basis for $\mathbb{Z}^d$. Let $K \subset [-N,N]^d$ be a convex body. Then we have:
$$|K \cap \mathbb{Z}^d \cap \Psi^{-1}(P^t)| = (1 + o_{t,d,L}(1)) \frac{\beta_{\infty}}{\log ^t N} \prod_{p \text{ prime}} \beta_p + o_{t,d,L}\left( \frac{N^d}{\log ^t N} \right),$$
where $P = \{2, 3, 5, \ldots \}$ denotes the prime numbers, the archimedean factor $\beta_{\infty}$ is $\text{vol}_d(K \cap \Psi^{-1}(\mathbb{R}_+^t))$ and the local factors $\beta_p$ are defined as $\beta_p = \mathbb{E}_{n \in \mathbb{Z}_p^d} \prod_i \Lambda_{\mathbb{Z}_p} (\psi_i(n))$ for each prime $p$ with $\Lambda_{\mathbb{Z}_p}$ being the \textit{local von Mangoldt function}, $\Lambda_{\mathbb{Z}_p}:\mathbb{Z}_p \rightarrow \mathbb{R}^+$, $\Lambda_{\mathbb{Z}_p}(0) = 0$ and $\Lambda_{\mathbb{Z}_p}(b) = \frac{p}{\phi(p)} = \frac{p}{p-1}$ if $b \neq 0$.
\end{conjecture}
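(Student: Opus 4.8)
This conjecture is not established from scratch here; rather, in the range relevant to this paper --- systems $\Psi$ of \emph{finite complexity} $s$ in the Green and Tao sense --- it is a theorem, obtained by combining the transference/linear-forms machinery of ``Linear equations in primes'' with the two deep inputs of the Green--Tao--Ziegler program: the inverse conjecture $\mathrm{GI}(s)$ for the Gowers $U^{s+1}$-norms (Green--Tao--Ziegler) and the M\"obius-and-nilsequences estimate $\mathrm{MN}(s)$ (Green--Tao). The plan is to recall that argument and, crucially, to verify that it applies to the magic-square systems, which is exactly what the complexity computations in the later sections provide.

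First I would perform the $W$-trick: choose $w=w(N)\to\infty$ slowly, set $W=\prod_{p\le w}p$, and partition $K\cap\Z^d$ according to the residue $n\equiv\aaa\pmod W$. For $\aaa$ such that some $\psi_i(\aaa)$ shares a factor with $W$, the contribution to the prime count is $o(N^d/\log^t N)$ and is discarded; the number of surviving residues $\aaa$ equals the truncated product $\prod_{p\le w}p^{d}\beta_p$, which is precisely how the Euler product $\prod_p\beta_p$ enters. On each surviving class, after rescaling $K$ to a convex body $K'$, the indicator of $\psi_i(n)\in P$ is replaced (up to the usual $\log N$ and $\phi(W)/W$ normalizations) by the modified von Mangoldt weight $\Lambda'_{b_i,W}$ evaluated at the rescaled form $\psi'_i$.

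Next I would apply the generalized von Neumann theorem: because $\Psi$ has complexity $s$, replacing each $\Lambda'_{b_i,W}$ by the constant $1$ in the average $\E_{n\in K'}\prod_i\Lambda'_{b_i,W}(\psi'_i(n))$ changes the value by $O_{t,d,L}\big(\|\Lambda'_{W}-1\|_{U^{s+1}}\big)+o_{t,d,L}(1)$. The crux is to bound this Gowers norm: one splits $\Lambda'_{W}-1$ into a bounded, essentially-constant-on-progressions part (controlled by equidistribution over $K'$) plus a part of negligible $U^{s+1}$-norm. The negligibility is the statement that the $W$-tricked von Mangoldt function does not correlate with any $s$-step nilsequence, which follows from $\mathrm{GI}(s)$ together with $\mathrm{MN}(s)$ after the standard Vaughan/Heath-Brown-type decomposition reducing $\Lambda$-correlations to $\mu$-correlations. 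Discarding the uniform part, the surviving main term is $\E_{n\in K'}1$ over the region where all forms are positive, i.e.\ $\mathrm{vol}_d(K\cap\Psi^{-1}(\mathbb{R}_+^t))=\beta_\infty$, divided by $\log^t N$ and multiplied by the truncated Euler product; letting $N\to\infty$ (so $w\to\infty$) and using the coprimality/non-proportionality of the $\psi_i$ to get absolute convergence of $\prod_p\beta_p$ completes the argument.

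The main obstacle is exactly this Gowers-uniformity step: it is the full strength of the Green--Tao--Ziegler inverse theorem and the M\"obius/nilsequence orthogonality, which are used here as black boxes rather than reproved. In the present paper the genuine work is therefore the \emph{verification of the hypothesis} --- showing that each system of linear forms describing $n\times n$ magic squares is admissible (no two forms proportional) and has finite complexity, and computing that complexity --- carried out in the following sections, together with the bookkeeping needed so that the $W$-trick, the rescaling of $K$, and the size bound $\|\Psi\|_N\le L$ combine to yield error terms uniform in $t,d,L$ as the statement demands.
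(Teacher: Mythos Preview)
Your reading is correct: the paper does not prove this statement at all. It is stated as a conjecture (Conjecture~1.4 from \cite{GT1}), and the paper's only ``proof'' is the two-line observation that Green and Tao's Main Theorem in \cite{GT1} reduces it, for systems of complexity at most $s$, to the conjectures $\mathrm{GI}(s)$ and $\mathrm{MN}(s)$, which are now theorems by \cite{GT3} and \cite{GT2} respectively. Hence the conjecture is a theorem for every system of finite complexity, which is all the paper needs.

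Your proposal goes considerably further than the paper by actually sketching the Green--Tao machinery (the $W$-trick, the generalized von Neumann theorem, the decomposition of $\Lambda'_W-1$, the role of $\mathrm{GI}(s)$ and $\mathrm{MN}(s)$ in controlling the Gowers norm). That sketch is accurate, and your identification of the paper's genuine contribution --- verifying admissibility and finite complexity for the magic-square systems, plus computing $\beta_\infty$ and the $\beta_p$ --- is exactly right. So there is no gap; you have simply supplied more of the black-box argument than the paper itself bothers to recall.
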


Informally speaking, in the same sense that the Prime Number Theorem states that if a random integer is selected near $N$ its probability of being prime is asymp\-totically  $1/\log N$, the conjecture asserts that the probability that a randomly selected point in $\Psi(\mathbb{Z}^d) \cap \mathbb{Z}_+^t$ ``of magnitude $N$'' has prime entries in all its coordinates is asymptotically $\prod_p \beta_p / \log^t N$.

\bigskip

Green and Tao introduced in \cite{GT1} a ``measure of how complicated such a problem is''. It is called complexity (or Cauchy-Schwarz complexity):
\begin{definition} \label{complexity}
If $\Psi = (\psi_1, \ldots, \psi_t)$ is a system of affine-linear forms, it has $i$-complexity at most $s$ if one can write the set of $t-1$ forms $\{\psi_1, \ldots, \psi_{i-1},\psi_{i+1}, \ldots, \psi_t \}$ as a union of $s+1$ sets, in such a way that $\psi_i$ does not lie in the affine-linear span over $\mathbb{Q}$ of any of these sets. The complexity of $\Psi$ is the least $s$ for which the system has $i$-complexity at most $s$ for all $1 \le i \le t$, or $\infty$ if no such $s$ exists. 
\end{definition}

The Main Theorem of Green and Tao in that same article, says:
\begin{theorem}
Suppose that the Inverse Gowers-norm conjecture $GI(s)$ and the M$\ddot{o}$bius and Nilsequences conjecture $MN(s)$ are true for some finite $s \ge 1$. Then the generalised Hardy-Littlewood conjecture is true for all systems of affine-linear forms of complexity at most $s$.
\end{theorem}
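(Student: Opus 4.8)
The plan is to follow the transference method of Green and Tao, whose essence is threefold: replace the count over primes by an average of a function that is bounded \emph{on average} by $1$ and dominated pointwise by a pseudorandom measure; show that this function differs from the constant $1$ only by something with negligible Gowers $U^{s+1}$-norm (this being the Gowers norm that governs systems of complexity $s$, and the one place where $GI(s)$ and $MN(s)$ enter); and invoke a generalised von Neumann theorem, so that for systems of complexity at most $s$ replacing the function by $1$ changes the count negligibly. Concretely, I would first run the $W$-trick: fix $w = w(N) \to \infty$ slowly, put $W = \prod_{p \le w} p$, and split the domain of $n \in \Z^d$ into residue classes modulo $W$; on the admissible classes (those on which each $\psi_i(n)$ can be coprime to $W$) the quantity $|K \cap \Z^d \cap \Psi^{-1}(P^t)|$ becomes, up to the factor $\log^{-t} N$ and an explicit constant, a sum over admissible residue vectors $b$ of normalised averages $\E_{n \in K'}\prod_{i=1}^t \Lambda_{W,b}(\psi_i(n))$, where $\Lambda_{W,c}(m) := \tfrac{\phi(W)}{W}\Lambda(Wm+c)$ is the $W$-normalised von Mangoldt function (of average $1 + o(1)$), the relevant residue being read off $b$ in each coordinate and the $\psi_i$ rescaled accordingly, and $K'$ a dilate of $K$. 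The weighted count of admissible classes reproduces the finite product $\prod_{p \le w}\beta_p$ while $\prod_{p>w}\beta_p = 1 + o(1)$, and a Goldston--Yildirim enveloping sieve supplies a measure $\nu \ge 0$ with $\Lambda_{W,b} \le C\nu$ pointwise that satisfies the linear-forms and correlation conditions relative to $\Psi$; it then remains to evaluate each normalised average.

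The crucial input --- and the only step using the two conjectures --- is the estimate $\|\Lambda_{W,b} - 1\|_{U^{s+1}[N']} = o(1)$ with $N' \sim N/W$: a Vaughan-type identity expresses the relevant $U^{s+1}$-correlation through bilinear (``Type I'' and ``Type II'') sums involving the M\"obius function; $GI(s)$ converts a large $U^{s+1}$-norm into correlation with a bounded-complexity $s$-step nilsequence; and $MN(s)$, fed through that bilinear structure, yields a contradiction. Granting this, the generalised von Neumann theorem --- proved by $s+1$ successive applications of the Cauchy--Schwarz inequality, each step using the complexity hypothesis to isolate one form while absorbing the others into $\nu$, the linear-forms condition keeping every step essentially lossless --- gives $\E_{n \in K'}\prod_i g_i(\psi_i(n)) = O_{t,d,L}\big(\min_i\|g_i\|_{U^{s+1}}\big) + o_{t,d,L}(1)$ whenever $|g_i| \le \nu + 1$. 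Writing $\Lambda_{W,b}(\psi_i(n)) = 1 + (\Lambda_{W,b}(\psi_i(n)) - 1)$ and expanding the product into $2^t$ terms, every term other than the all-ones term carries a factor of $U^{s+1}$-norm $o(1)$ and is negligible, so each normalised average equals $\mathrm{vol}(K' \cap \Psi^{-1}(\mathbb{R}_+^t))/\mathrm{vol}(K') + o(1)$; re-summing over $b$ then reassembles $\beta_\infty \prod_p \beta_p/\log^t N + o(N^d/\log^t N)$, which is the claimed asymptotic.

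The principal obstacle is exactly the Gowers-uniformity of the $W$-tricked von Mangoldt function: this is the deep arithmetic heart of the matter and is essentially the whole Green--Tao programme, demanding not only the two conjectural black boxes but the full quantitative equidistribution theory of polynomial orbits on nilmanifolds, so as to process --- uniformly in complexity --- the nilsequence correlations that $GI(s)$ produces. A second, pervasive difficulty is that $\Lambda_{W,b}$ is \emph{unbounded}, so the generalised von Neumann theorem and all the Gowers-norm manipulations must be carried out relative to the majorant $\nu$ rather than for $1$-bounded functions --- which is precisely what forces the delicate correlation condition into the hypotheses --- and one must choose $w \to \infty$ slowly enough that the accumulated $o_{t,d,L}(1)$ errors, from the sieve, from the Cauchy--Schwarz losses, and from the nilmanifold equidistribution, all remain genuinely negligible.
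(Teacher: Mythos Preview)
The paper does not prove this theorem at all: it is quoted verbatim as ``the Main Theorem of Green and Tao'' from \cite{GT1}, and the paper simply uses it as a black box together with the subsequent results \cite{GT2} and \cite{GT3} establishing $MN(s)$ and $GI(s)$. There is therefore nothing in the paper to compare your proposal against.

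That said, your sketch is a faithful high-level outline of the argument actually carried out in \cite{GT1}: the $W$-trick and reduction to admissible residue classes, the Goldston--Y{\i}ld{\i}r{\i}m enveloping sieve as pseudorandom majorant, the decomposition $\Lambda_{W,b} = 1 + (\Lambda_{W,b}-1)$, the generalised von Neumann theorem via $s+1$ Cauchy--Schwarz applications exploiting the complexity hypothesis, and the identification of $GI(s)$ together with $MN(s)$ as precisely what is needed to force $\|\Lambda_{W,b}-1\|_{U^{s+1}} = o(1)$. Your description of the obstacles is also accurate. If anything, one should note that in \cite{GT1} the relative generalised von Neumann theorem requires not only the linear-forms condition but also passing through a Koopman--von Neumann type structure theorem (a dense model theorem) to transfer from the $\nu$-bounded setting to the $1$-bounded setting; your sketch elides this step, which is nontrivial. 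But as a two-paragraph summary of a long paper it is essentially on target --- it just isn't something the present paper attempts.
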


In \cite{GT2} the \textit{M$\ddot{o}$bius and Nilsequences conjecture} $MN(s)$ is proved for every $s$ and in \cite{GT3} the \textit{Inverse Gowers-norm conjecture} $GI(s)$ is proved for every $s$. Then, Conjecture \ref{conjeHL} is in fact a theorem for every system of affine-linear forms of finite complexity.

\bigskip

For the problem of $n \times n$ magic squares, as we shall soon see, we will be dealing with linear instead of affine-linear forms. In Section \ref{sec2basis}, we will find suitable linear forms for our problem for every $n \ge 3$. In Section \ref{seccomp}, we will calculate the complexity of all these systems of linear forms. In Section \ref{sec4vol}, we will deal with the computation of the volumes of the polytopes that arise in our problem. In Section \ref{sec5local}, we will face the computation of the local factors $\beta_p$. Based on the work of the other sections, in Section \ref{sec6asym} we will be able to give our asymptotics. Finally, in Section \ref{sec7dif}, we will show that the asymptotics are the same if we impose the condition that our magic squares must have different entries.

\section{Basis for Magic Squares}\label{sec2basis}

\begin{definition}
For any $n \ge 3$, an $n \times n$ $\mathbb{R}$-magic square is an $n \times n$ array with its $n^2$ entries being real numbers, in such a way that the sum of the elements in every row, column or any of the two main diagonals is the same, the magic sum $S$. If all its entries are integers, we will say it is a $\mathbb{Z}$-magic square.
\end{definition}

\begin{remark}
For every $n \ge 3$, $n \times n$ $\mathbb{R}$-magic squares have the structure of a $\mathbb{R}$-vector space and $n \times n$ $\mathbb{Z}$-magic squares form an abelian group with the sum, and so a $\mathbb{Z}$-module, and we will soon see that we can find a basis for this $\mathbb{Z}$-module. Most of the time in this article we will be only interested in $\mathbb{Z}$-magic squares. When we want to specify that the $n^2$ entries of a magic square are different (in fact, what is usually called a magic square is a $\mathbb{Z}$-magic square with different entries), we will explicitly state it. Also, we will specify when we want the entries to belong to certain set (e.g. $\{0, \ldots , N\}$ or the primes in this set).
\end{remark}

For a $3 \times 3$ $\mathbb{Z}$-magic square, naming its entries $x_1, x_2, \ldots, x_9$ from left to right and from top to bottom (as we will always do in the rest of the article\footnote{We will think about these vectors with $n^2$ coordinates both as column vectors and as $n \times n$ squares. So, we will sometines make an abuse of notation refering to the second row of a vector (entries $n+1, \ldots, 2n$) or its top-left to bottom-right diagonal (entries $1, n+2, 2n+3, \ldots, n^2$).}), we have:
$$(x_1 + x_5 + x_9) + (x_2 + x_5 + x_8) + (x_3 + x_5 + x_7) = 3S$$
and since $x_1 + x_2 + x_3 = x_7 + x_8 + x_9 = S$, we have that $S = 3 x_5$. So, if the integer $x_5 = a$, then $x_1$ and $x_9$ will be respectively $a+b$ and $a-b$ for some integer $b$. In the same way, $x_3$ and $x_7$ will be respectively $a+c$ and $a-c$ for some integer $c$. The other four entries of the square are now determined, as we can see in Figure \ref{figura1}.

\begin{figure}[h]
\begin{center}
\begin{tabular}{l c r c l c r}
  $x_1$ & $x_2$ & $x_3$ &  \qquad \qquad & $a+b$ & $a-b-c$ & $a+c$ \\
  $x_4$ & $x_5$ & $x_6$ & $=$ & $a-b+c$ & $a$ & $a+b-c$ \\
  $x_7$ & $x_8$ & $x_9$ & \qquad \qquad & $a-c$ & $a+b+c$ & $a-b$ \\
\end{tabular}
\end{center}
\caption{Entries of a $3 \times 3$ magic square.}
\label{figura1}
\end{figure}

So, we already have our system of linear forms for $3 \times 3$ $\mathbb{Z}$-magic squares, $\Psi: \mathbb{Z}^3 \to \mathbb{Z}^9$, $\Psi(a,b,c)=(a+b,a-b-c,a+c,a-b+c,a,a+b-c,a-c,a+b+c,a-b)$.

\bigskip

Not only for any $a, b, c \in \mathbb{Z}$ we have that $\Psi(a,b,c)$ gives the entries of a $\mathbb{Z}$-magic square, but also, because of the way we constructed it, we have that every $3 \times 3$ $\mathbb{Z}$-magic square is obtained for some $a, b, c \in \mathbb{Z}$ (and this last thing is important because, if not, our linear forms could be describing only a subset of all $\mathbb{Z}$-magic squares). In other words, and this approach will be useful for magic squares with larger side, looking at the linear forms, we conclude that given any $3 \times 3$ $\mathbb{Z}$-magic square, we can obtain it as an integer linear combination of the three magic squares given in Figure \ref{figure3x3}.

\begin{figure}
\begin{center}
\begin{tabular}{l c r c l c r c l c r}
  $1$ & $1$ & $1$ & \qquad \qquad & $1$ & $-1$ & $0$ & \qquad \qquad & $0$ & $-1$ & $1$ \\
  $1$ & $1$ & $1$ & \qquad \qquad & $-1$ & $0$ & $1$ & \qquad \qquad & $1$ & $0$ & $-1$ \\
  $1$ & $1$ & $1$ & \qquad \qquad & $0$ & $1$ & $-1$ & \qquad \qquad & $-1$ & $1$ & $0$ \\
\end{tabular}
\end{center}
\caption{$\mathbb{Z}$-basis for $3 \times 3$ $\mathbb{Z}$-magic squares.}
\label{figure3x3}
\end{figure}

\bigskip

Then, we have a basis for the $\mathbb{Z}$-module of $3 \times 3$ $\mathbb{Z}$-magic squares. Obtaining this basis is clearly equivalent to obtaining the linear forms we are interested in: if we write the entries of each one of the squares of the basis as column vectors, one after another, and we look at the rows, we will read the coefficients of the linear forms.

\bigskip

Since a finite set of vectors with integer entries is linearly independent over $\mathbb{Z}$ if and only if it is linearly independent over $\mathbb{R}$ (the ``if'' part is obvious and the ``only if'' part can be deduced from the fact that, thinking the vectors of the set as rows of a matrix, elementary row operations over $\mathbb{Z}$ give a row reduced form over $\mathbb{Z}$ of the matrix), in what follows we can interpret if we want ``linearly independent'' as ``linearly independent over $\mathbb{R}$''.

\begin{definition} \label{zbasis}
We will say that a linearly independent set of $n \times n$  $\mathbb{Z}$-magic squares is a $\mathbb{Z}$-basis for $n \times n$  $\mathbb{Z}$-magic squares if every $n \times n$ $\mathbb{Z}$-magic square can be obtained as an integer linear combination of its elements. We will say that a linearly independent set of $n \times n$ $\mathbb{R}$-magic squares is an $\mathbb{R}$-basis for $n \times n$ $\mathbb{R}$-magic squares if every $n \times n$ $\mathbb{R}$-magic square can be obtained as a real linear combination of its elements.
\end{definition}

We remark that the given $\mathbb{Z}$-basis for $3 \times 3$ magic squares (as every other $\mathbb{Z}$-basis) is also an $\mathbb{R}$-basis. But not every $\mathbb{R}$-basis (even if all its elements have only integer entries) is a $\mathbb{Z}$-basis. So, how to obtain an $\mathbb{R}$-basis for $n \times n$ magic squares and how to make sure it is a $\mathbb{Z}$-basis?

\bigskip

\label{sistlineqA}First of all we can easily obtain a system of linear equations that defines $n \times n$ $\mathbb{R}$-magic or $\mathbb{Z}$-magic squares. First, we have $n - 1$ conditions (linear equations) of \textit{type $A$} saying that the sum of the entries of the $i$-th row is equal to the sum of the entries of the $i+1$-th row (for $i = 1, \ldots, n-1$). Then, we have other $n - 1$ conditions of \textit{type $B$} saying that the sum of the entries of the $i$-th column is equal to the sum of the entries of the $i+1$-th column (for $i = 1, \ldots, n-1$). These conditions also imply that the sum of every row is equal to the sum of every column. Now, we have condition $C$ saying that the sum of the entries of one main diagonal is equal to the sum of the entries of the other main diagonal. Finally, we have condition $D$ saying that the sum of the entries of the main diagonal from top-left to bottom-right is equal to the sum of the entries of (say) the first column. These $2n$ conditions clearly define $n \times n$ $\mathbb{R}$-magic or $\mathbb{Z}$-magic squares. If we name $x$ the column vector with entries $x_1, \ldots, x_{n^2}$, and $0$ the column vector with $2n$ zeros, then $n \times n$ $\mathbb{R}$-magic or $\mathbb{Z}$-magic squares are defined by the system of linear equations $Ax = 0$, where $A$\label{matriza} is the matrix formed with the coefficients of the $2n$ conditions previously described, which, for example, for $n=4$ would be the one given in Figure \ref{figure2}.
\begin{figure}
$$ \left( \begin{array}{cccccccccccccccc}
1 & 1 & 1 & 1 & -1 & -1 & -1 & -1 & 0 & 0 & 0 & 0 & 0 & 0 & 0 & 0 \\
0 & 0 & 0 & 0 & 1 & 1 & 1 & 1 & -1 & -1 & -1 & -1 & 0 & 0 & 0 & 0 \\
0 & 0 & 0 & 0 & 0 & 0 & 0 & 0 & 1 & 1 & 1 & 1 & -1 & -1 & -1 & -1 \\
1 & -1 & 0 & 0 & 1 & -1 & 0 & 0 & 1 & -1 & 0 & 0 & 1 & -1 & 0 & 0 \\
0 & 1 & -1 & 0 & 0 & 1 & -1 & 0 & 0 & 1 & -1 & 0 & 0 & 1 & -1 & 0 \\
0 & 0 & 1 & -1 & 0 & 0 & 1 & -1 & 0 & 0 & 1 & -1 & 0 & 0 & 1 & -1 \\
1 & 0 & 0 & -1 & 0 & 1 & -1 & 0 & 0 & -1 & 1 & 0 & -1 & 0 & 0 & 1 \\
0 & 0 & 0 & 0 & 1 & -1 & 0 & 0 & 1 & 0 & -1 & 0 & 1 & 0 & 0 & -1
\end{array} \right) $$
\caption{Matrix $A$ for $n = 4$.}
\label{figure2}
\end{figure}

\bigskip

Are the $2n$ rows of the matrix $A$ linearly independent? Yes, they are. \label{elegante}An elegant way to see it is to exhibit, for every $i = 1, \ldots, 2n$, a vector that satisfies the conditions in all the previous rows but not the condition in row $i$. The vectors of \textit{type $a$} with ones in the first $in$ positions (the first $i$ rows) and zeros in the rest (for $i = 1, \ldots, n-1$) satisfy all \textit{type $A$} conditions except from condition $i$ (in particular all conditions above row $i$ and not condition in row $i$). The same happens with vectors of \textit{type $b$} that have ones in the positions ``$j \ +$ multiple of $n$'' for $j = 1, \ldots, i$ (the first $i$ columns) and zeros in the rest (for $i = 1, \ldots, n-1$): they satisfy all conditions above $n-1+i$ but not condition $n-1+i$. The vector $c$ with one in all its positions except from positions $1, n+2, 2n+3, \ldots, n^2$ (the main diagonal from top-left to bottom-right) where it has zeros, satisfies all conditions of \textit{type $A$} and \textit{type $B$} but not condition $C$. Finally, the vector $d$ with zero in all its positions except from positions $1, n, n+2, 2n-1, 2n-3, 3n-2, \ldots, n^2-n+1, n^2$ (the two main diagonals) where it has ones (or a two in position $(n^2 + 1)/2$ when $n$ is odd), satisfies all conditions of \textit{type $A$} and \textit{type $B$} and condition \textit{C} but not condition $D$. Figure \ref{figure3} shows an example of how vectors of \textit{type $a$} and \textit{type $b$} and vectors $c$ and $d$ look like when we write them as $n \times n$ squares, for $n = 5$.


\begin{figure}
\begin{footnotesize}
\begin{center}
\begin{tabular}{c c c c c c c c c c c c c c c c c c c c c c c c c c c c c}
  $1$ & $1$ & $1$ & $1$ & $1$ & \qquad & $1$ & $1$ & $1$ & $0$ & $0$ & \qquad & $0$ & $1$ & $1$ & $1$ & $1$ & \qquad & $1$ & $0$ & $0$ & $0$ & $1$ \\
  $1$ & $1$ & $1$ & $1$ & $1$ & \qquad & $1$ & $1$ & $1$ & $0$ & $0$ & \qquad & $1$ & $0$ & $1$ & $1$ & $1$ & \qquad & $0$ & $1$ & $0$ & $1$ & $0$ \\
  $0$ & $0$ & $0$ & $0$ & $0$ & \qquad & $1$ & $1$ & $1$ & $0$ & $0$ & \qquad & $1$ & $1$ & $0$ & $1$ & $1$ & \qquad & $0$ & $0$ & $2$ & $0$ & $0$ \\
  $0$ & $0$ & $0$ & $0$ & $0$ & \qquad & $1$ & $1$ & $1$ & $0$ & $0$ & \qquad & $1$ & $1$ & $1$ & $0$ & $1$ & \qquad & $0$ & $1$ & $0$ & $1$ & $0$ \\
  $0$ & $0$ & $0$ & $0$ & $0$ & \qquad & $1$ & $1$ & $1$ & $0$ & $0$ & \qquad & $1$ & $1$ & $1$ & $1$ & $0$ & \qquad & $1$ & $0$ & $0$ & $0$ & $1$ 
\end{tabular}
\end{center}
\end{footnotesize}
\caption{Examples of vectors of  \textit{type $a$} and \textit{type $b$} and vectors $c$ and $d$ for $n=5$.}
\label{figure3}
\end{figure}

\bigskip

We deduce that the rank of matrix $A$ is $2n$, and so the $\mathbb{R}$-vector space of $\mathbb{R}$-magic squares has dimension $n^2 - 2n$. To find an $\mathbb{R}$-basis of this vector space we need to find $n^2 - 2n$ linearly independent vectors that satisfy the $2n$ equations, that is, $2n$ linearly independent magic squares. This can be done easily for every fixed $n$, since we know how to solve a system of linear equations. But we also want our $\mathbb{R}$-basis to be a $\mathbb{Z}$-basis. In principle it would not be obvious even that a $\mathbb{Z}$-basis exists.

\bigskip

We could, for example, look for an appropriate matrix obtained from $A$ by elementary row operations, with $2n$ of its columns being the vectors of the canonical basis of the $2n$-dimensional Euclidean space and with all the rest of its entries being integer numbers. Instead of that, and maybe more naturally, we will give an explicit $\mathbb{Z}$-basis for $n \times n$ magic squares and this will prove in particular that such a matrix exists for every $n \ge 4$. \label{comentariomatrices}

\bigskip

We know the number of linearly independent vectors that we need: $n^2 - 2n$. In the case $n = 3$ we had $3^2 - 2 \cdot 3 = 3$ vectors in our $\mathbb{Z}$-basis. For $n = 4$ we need $4^2 - 2 \cdot 4 = 8$ linearly independent magic squares that form a $\mathbb{Z}$-basis.

\bigskip

The easiest way to understand our basis for $n \times n$ $\mathbb{Z}$-magic squares for $n \ge 4$ is the next. We will carefully select some positions of the square, more concretely $n^2 - 2n$ positions, in such a way that one time we fix the entries in these positions, the rest of the entries of the magic square are determined (we have discovered that this idea is already pointed out in \cite{W}). The selected positions will form the skeleton of the basis (following the notation in \cite{W}). Then, the squares of the basis will be the only $n^2 - 2n$ squares that have one as an entry in one position of the skeleton and zeros in all the rest of it.

\bigskip

For $n = 4$, selecting the next skeleton of 8 positions,
\begin{center}
\includegraphics[width=0.2\textwidth]{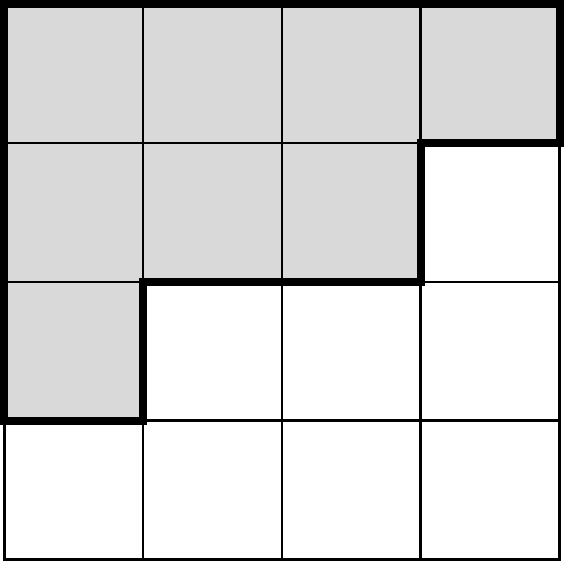}
\end{center}
\noindent we have the $\mathbb{Z}$-basis for $4 \times 4$ $\mathbb{Z}$-magic squares shown in Figure \ref{figure4}.

\begin{figure}[h]
\begin{footnotesize}
\begin{center}
\begin{tabular}{cccccccccccccc}
$\textbf{1}$ & $\textbf{0}$ & $\textbf{0}$ & $\textbf{0}$ & \qquad & $\textbf{0}$ & $\textbf{1}$ & $\textbf{0}$ & $\textbf{0}$ & \qquad & $\textbf{0}$ & $\textbf{0}$ & $\textbf{1}$ & $\textbf{0}$ \\
$\textbf{0}$ & $\textbf{0}$ & $\textbf{0}$ & $1$ & \qquad & $\textbf{0}$ & $\textbf{0}$ & $\textbf{0}$ & $1$ & \qquad & $\textbf{0}$ & $\textbf{0}$ & $\textbf{0}$ & $1$ \\
$\textbf{0}$ & $1$ & $0$ & $0$ & \qquad & $\textbf{0}$ & $0$ & $1$ & $0$ & \qquad & $\textbf{0}$ & $0$ & $1$ & $0$ \\
$0$ & $0$ & $1$ & $0$ & \qquad & $1$ & $0$ & $0$ & $0$ & \qquad & $1$ & $1$ & $-1$ & $0$ \\
\\
$\textbf{0}$ & $\textbf{0}$ & $\textbf{0}$ & $\textbf{1}$ & \qquad & $\textbf{0}$ & $\textbf{0}$ & $\textbf{0}$ & $\textbf{0}$ & \qquad &  $\textbf{0}$ & $\textbf{0}$ & $\textbf{0}$ & $\textbf{0}$ \\
$\textbf{0}$ & $\textbf{0}$ & $\textbf{0}$ & $1$ & \qquad & $\textbf{1}$ & $\textbf{0}$ & $\textbf{0}$ & $-1$ & \qquad & $\textbf{0}$ & $\textbf{1}$ & $\textbf{0}$ & $-1$ \\
$\textbf{0}$ & $-1$ & $2$ & $0$ & \qquad & $\textbf{0}$ & $1$ & $-1$ & $0$ & \qquad & $\textbf{0}$ & $0$ & $-1$ & $-1$ \\
$1$ & $2$ & $-1$ & $-1$ & \qquad & $-1$ & $-1$ & $1$ & $1$ & \qquad & $0$ & $-1$ & $1$ & $0$ \\
\\
& & $\textbf{0}$ & $\textbf{0}$ & $\textbf{0}$ & $\textbf{0}$ & & & $\textbf{0}$ & $\textbf{0}$ & $\textbf{0}$ & $\textbf{0}$ \\
& & $\textbf{0}$ & $\textbf{0}$ & $\textbf{1}$ & $-1$ & & & $\textbf{0}$ & $\textbf{0}$ & $\textbf{0}$ & $0$ \\
& & $\textbf{0}$ & $-1$ & $0$ & $1$ & & & $\textbf{1}$ & $1$ & $-1$ & $-1$ \\
& & $0$ & $1$ & $-1$ & $0$ & & & $-1$ & $-1$ & $1$ & $1$
\end{tabular}
\end{center}
\end{footnotesize}
\caption{$\mathbb{Z}$-basis for $4 \times 4$ $\mathbb{Z}$-magic squares.}
\label{figure4}
\end{figure}

\smallskip

So everything has worked well but, in principle, we have been lucky. There is not an obvious reason why the entries outside the skeleton should be integers. As an example showing that this does not always happen, a $3 \times 3$ $\mathbb{R}$-magic square is characterised by the three entries of its first row, and the only $\mathbb{R}$-magic square starting with one, zero and zero in these three positions is:
\begin{center}
\begin{tabular}{l c r l}
  $1$ & $0$ & $0$ \\
  $-2/3$ & $1/3$ & $4/3$\\
  $2/3$ & $2/3$ & $-1/3$ & .\\
\end{tabular}
\end{center}

\bigskip

With this in mind, we define our basis for $n \ge 5$:

\begin{definition}
\textbf{(Elephant basis)} For $n \ge 5$, consider the skeleton in an $n \times n$ square consisting of all the elements of the first row, all the elements of rows 2 to $n-2$ except from the last one of each row and all the elements of row $n-1$ except from the second, the $(n-2)$-th and the last one. The $n \times n$ elephant basis is formed by the $n^2 - 2n$ $n \times n$ $\mathbb{Z}$-magic squares that have one as an entry in one of the positions of the skeleton and zeros in all its other positions.
\end{definition}

\begin{figure}[h]
\begin{center}
\includegraphics[width=0.3\textwidth]{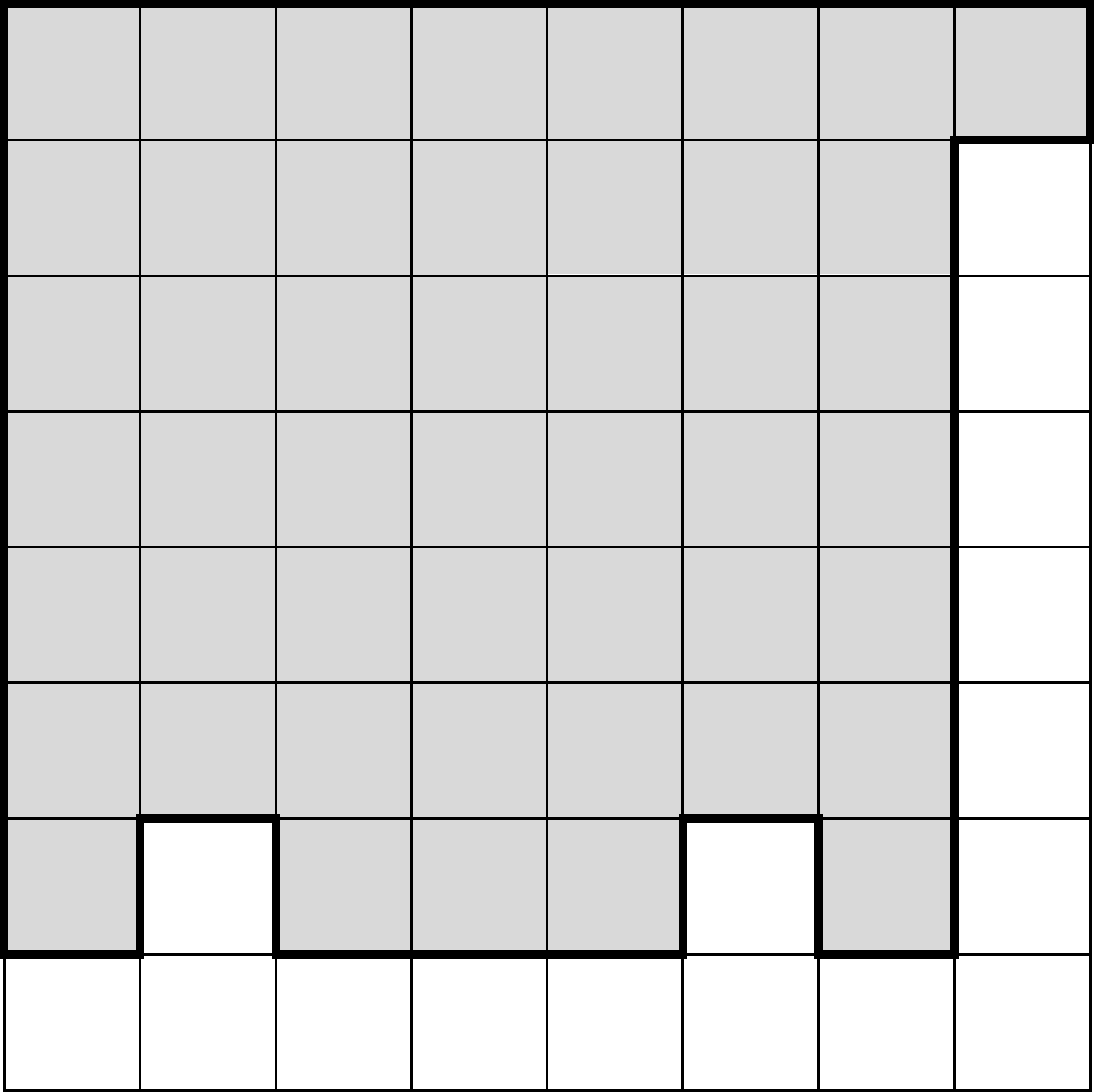}
\end{center}
\caption{Skeleton of the elephant basis.}
\label{elephant}
\end{figure}

To see that the definition is right and that it gives us what we want, we prove the next lemma.

\begin{lemma} \label{lemaele}
For every $n \ge 5$ the elephant basis is a $\mathbb{Z}$-basis for $n \times n$ $\mathbb{Z}$-magic squares. 
\end{lemma}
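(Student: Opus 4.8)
The plan is to exploit that the skeleton has exactly $n^2-2n$ positions — $n$ from the first row, $(n-3)(n-1)$ from rows $2$ to $n-2$, and $n-3$ from row $n-1$, which sum to $n+(n-3)(n-1)+(n-3)=n+(n-3)n=n^2-2n$, the dimension of the space of $\mathbb{R}$-magic squares computed above. Let $r$ denote the linear restriction map sending an $\mathbb{R}$-magic square to the tuple of its entries in the skeleton positions. Everything reduces to two claims: (i) $r$ is a bijection onto $\mathbb{R}^{n^2-2n}$, so that for each skeleton position there is a unique $\mathbb{R}$-magic square with a $1$ there and $0$ in the remaining skeleton positions — these being precisely the elephant-basis elements $E_s=r^{-1}(e_s)$; and (ii) $r^{-1}$ sends integer tuples to squares all of whose entries are integers. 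Granting (i) and (ii), the $E_s$ are $\mathbb{R}$-linearly independent because the standard basis vectors $e_s$ are, they are $\mathbb{Z}$-magic by (ii), and any $\mathbb{Z}$-magic square $M$ equals $\sum_s m_s E_s$ with $m_s\in\mathbb{Z}$ its skeleton entries, because $M-\sum_s m_s E_s$ is a magic square killed by $r$ and hence $0$ by injectivity; so $\{E_s\}$ is a $\mathbb{Z}$-basis.

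To establish (i) and (ii) at one stroke I would exhibit an explicit order in which the $2n$ entries lying outside the skeleton become \emph{forced}, each by a single magic-sum condition in which it occurs with coefficient $1$, so that each is an integer linear combination of the entries already known (and of the magic sum $S$, which is just the integer sum of the first row, that row being entirely in the skeleton). Concretely: first, for $i=2,\ldots,n-2$ the last entry of row $i$ is forced by ``row $i$ sums to $S$'', completing rows $1,\ldots,n-2$; next, for each column $j\notin\{2,n-2,n\}$ the bottom entry $(n,j)$ is forced by ``column $j$ sums to $S$'' (every other entry of that column is then known, since $(n-1,j)$ lies in the skeleton exactly for these $j$); next, the main diagonal has the single remaining unknown $(n,n)$, forced by its diagonal sum, and then the anti-diagonal has the single remaining unknown $(n-1,2)$, forced by its diagonal sum; finally ``column $2$'' forces $(n,2)$, ``row $n$'' forces $(n,n-2)$, ``column $n-2$'' forces $(n-1,n-2)$, and ``column $n$'' forces $(n-1,n)$. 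Since each of the $2n$ outside entries was forced, $r$ is injective, hence (equal dimensions) bijective, proving (i); and integrality at every step proves (ii). It remains to check the square so produced is genuinely magic: all row, column and diagonal sums were set equal to $S$ except the sum of row $n-1$, and that one is automatically $S$ because the grand total of the entries is $nS$ (summing over columns) and also equals $(n-1)S$ plus the sum of row $n-1$ (summing over the other rows).

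The substantive work — and the only place I expect real friction — is the bookkeeping that at each step the relevant line has exactly one still-undetermined entry, and this is exactly where $n\ge 5$ enters: one needs $2$, $n-2$, $n$ to be pairwise distinct and distinct from $1$ and from $n-1$ (so that, for instance, $(n-1,n-1)$ lies in the skeleton and $(n,1)$ has already been determined), which requires $2<n-2$. Once the elimination order above is fixed, the verification is a finite, routine case check; conversely, discovering that this particular skeleton admits such a $\pm1$-triangular elimination order is really the content of the definition of the elephant basis — contrast the non-integral $3\times 3$ example above, where the naive ``first row'' skeleton fails to give a $\mathbb{Z}$-basis.
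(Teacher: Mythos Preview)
Your proof is correct and follows essentially the same approach as the paper: both arguments fix integer skeleton data, read off the magic sum $S$ from row~$1$, and then force the $2n$ non-skeleton entries one at a time, each via a single row/column/diagonal condition in which it appears with coefficient~$1$, so integrality is automatic. The only cosmetic difference is the order of the last four determinations: the paper fills $(n-1,n)$, $(n,2)$, $(n-1,n-2)$, $(n,n-2)$ and then checks that the row-$n$ and column-$(n-2)$ conditions agree on the final entry, whereas you fill $(n,2)$, $(n,n-2)$, $(n-1,n-2)$, $(n-1,n)$ and verify the leftover row-$(n-1)$ condition via the grand total; both are the same redundancy in disguise.
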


\begin{proof}
First of all we will show that if we fix any integer entries in the positions of the skeleton (in particular if one of them is 1 and all the rest are 0), they determine a unique $\mathbb{R}$-magic square which is also a $\mathbb{Z}$-magic square. We first observe that the skeleton contains the first row of the square, so adding the values in these positions gives the magic sum of the square. Since the magic sum is an integer number, we can now determine the integer numbers of the last position of rows 2 to $n-2$ (because of the sum on their rows), the last position of columns 1, 3 to $n-3$ and $n-1$ (because of the sum on their columns) and the one in the bottom-right corner of the square (because of the sum on its main diagonal). With these integers already determined, we can determine the integer in position 2 of row $n-1$ (because of the sum on its main diagonal) and the one in the last position of the same row (because of the sum on its column). Now, we can determine the integer in the second position of the last row (because of the sum on its column) and the one in position $n-2$ of row $n-1$ (because of the sum of its row). The value in position $n-2$ of the last row is the only one left to be determined. Since the sum of all the elements of the square is the same if we sum them by rows or by columns, we will of course obtain the same integer if we determine it with the values of column $n-2$ or the ones in the last row.

Now, the vectors are linearly independent by construction (each of them has a 1 in a position where all the others have a 0), and, given a $\mathbb{Z}$-magic square, we can look at the values it has in the positions of the skeleton and we can express it as the linear combination with these coefficients in the corresponding vectors of the elephant basis. So the elephant basis is a $\mathbb{Z}$-basis.
\end{proof}

Following exactly the procedure described in the proof of Lemma \ref{lemaele} we can determine all the entries of all the squares in our elephant basis. If we write them as column vectors, one after another, forming a matrix, the rows will give us the linear forms we were looking for. Figure \ref{matelebasis} gives this $n^2 \times (n^2-2n)$ matrix. We will identify each row of coefficients with the corresponding linear form. The $n^2 - 2n$ forms marked in gray in the figure are the ``canonical'' linear forms with all their coefficients equal to zero except from one which is 1 and correspond with the positions of the skeleton of the elephant basis. We will call these the trivial linear forms. The other $2n$ forms will be called the nontrivial linear forms.

\bigskip

Of course, the first $n$ vectors (columns) have magic sum 1 and all the others have magic sum 0. The pattern on each row, if we look at the blocks of $n-1$ columns indicated (from column $n+1$ to column $n^2 - 3n + 3$), should be obvious. Maybe only some words about the patterns on the third and $(n+3)$-rd row from the bottom should be said. We explain what happens in the third row from the bottom (the other is very similar). The first $n$ and the last $n-3$ positions do not give any problem, so we look at the $n-3$ blocks of $n-1$ positions from column $n+1$ to column $n^2 - 3n + 3$. To obtain the second block from the first (observe that in the case $n = 5$ the first block is $2\ 2\ 0\ 0$), we substract 1 in the second position and add 1 in the third and also substract 1 in the $(n-2)$-nd and add 1 in the $(n-1)$-st. To obtain the third block from the second, the $-1$ and $+1$ on the left move one position to the right (we substract 1 in the third position and add 1 in the fourth) and the $-1$ and $+1$ on the right move one position to the left (we substract 1 in the $(n-3)$-rd position and add 1 in the $(n-2)$-nd). And we continue in the same fashion.

\begin{landscape}
\begin{figure}
\begin{center}
\includegraphics[width=1.4\textwidth]{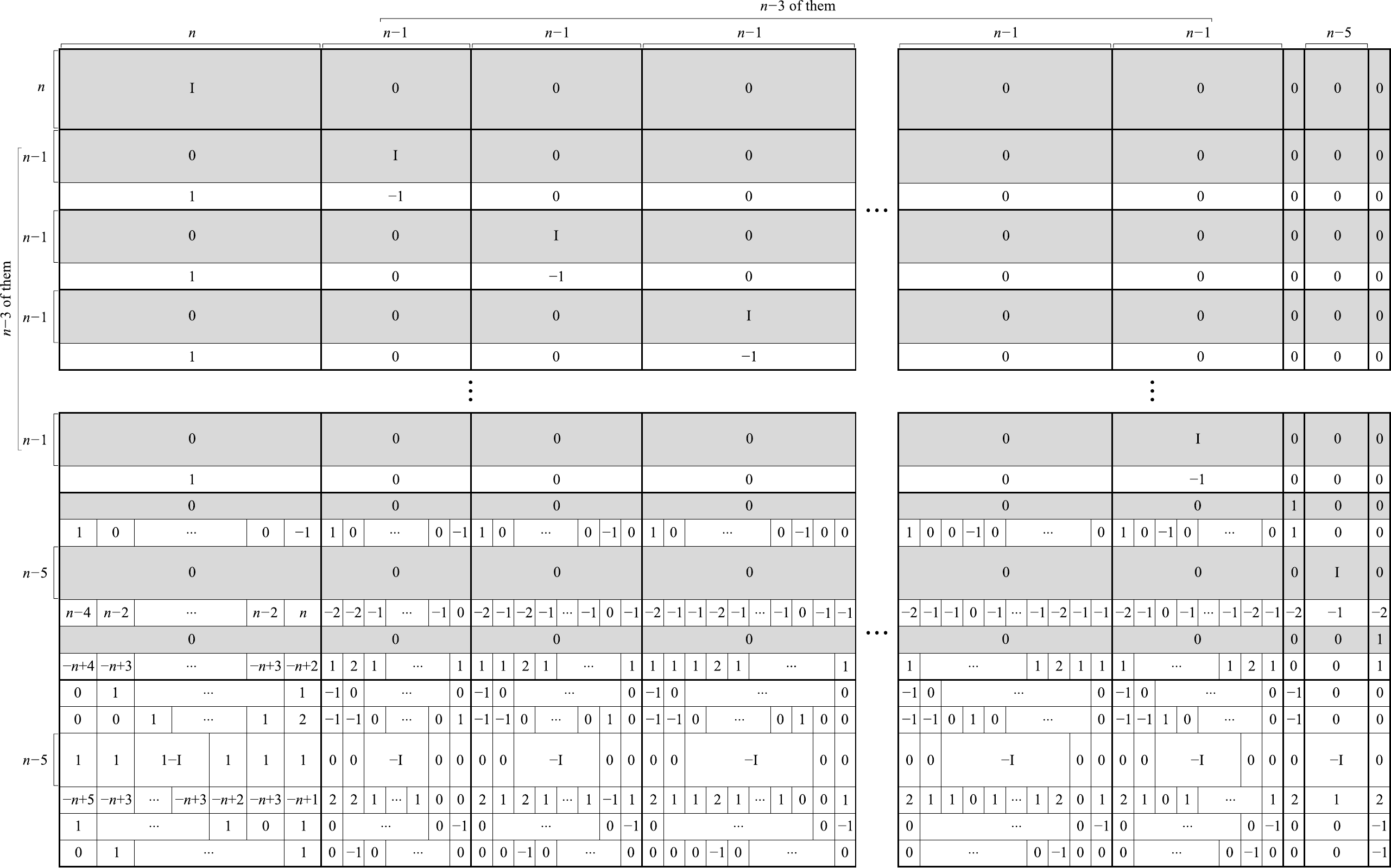}
\end{center}
\caption{Matrix formed with the vectors of the elephant basis as columns (coefficients of linear forms as rows) for any $n \ge 5$.}
\label{matelebasis}
\end{figure}
\end{landscape}

%

As we announced in our comment on page \pageref{comentariomatrices}, obtaining $n^2 - 2n$ trivial linear forms proves in particular that there is a matrix obtained from $A$ by elementary row operations, with $2n$ of its columns being the vectors of the canonical basis of the $2n$-dimensional space and with all the rest of its entries being integer numbers. In fact, we can very easily write such a matrix explicitly from the linear forms: the vectors of the canonical basis of the $2n$-dimensional space will be in the $2n$ columns of the matrix corresponding to the nontrivial linear forms and the other $n^2-2n$ entries of the rows of $A$ will be the opposites of the coefficients of the nontrivial linear forms.


\bigskip

Then we have our linear forms for every $n$. Now what? We turn to the complexity problem.

\section{Complexity}\label{seccomp}
Althought the results mentioned in Section \ref{intro} tell us that we can use Conjecture \ref{conjeHL} as a theorem for every system of affine-linear forms of finite complexity, determining the exact complexity of a problem is theoretically important. Problems of complexity 1 were essentially present in the work of Hardy-Littlewood and Vinogradov, though not in this language. To solve problems of complexity 2 we need to use the results in \cite{GT}, \cite{GT0} and \cite{GT1}. To solve problems of complexity 3 or higher we need to use the results in \cite{GT1}, \cite{GT2} and \cite{GT3}.

\bigskip

We have $\mathbb{Z}$-basis (or equivalently linear forms) for $3 \times 3$, $4 \times 4$ and $n \times n$ (for $n \ge 5$)  $\mathbb{Z}$-magic squares in Figure \ref{figure3x3}, Figure \ref{figure4} and Figure \ref{matelebasis} respectively. We will calculate the complexity (see Definition \ref{complexity}) of these systems of linear forms.

\subsection{Complexity for 3 $\times$ 3 $\mathbb{Z}$-magic squares}

The system of linear forms given by the $\mathbb{Z}$-basis of Figure \ref{figure3x3} is, as we pointed out, $\Psi(a,b,c)=(a+b,a-b-c,a+c,a-b+c,a,a+b-c,a-c,a+b+c,a-b)$. We will indentify these linear forms with the row vectors $(1,1,0)$, $(1,-1,-1)$, $(1,0,1)$, $(1,-1,1)$, $(1,0,0)$, $(1,1,-1)$, $(1,0,-1)$, $(1,1,1)$ and $(1,-1,0)$.

\begin{lemma}
The complexity of the system of linear forms given by the $\mathbb{Z}$-basis for $3 \times 3$ $\mathbb{Z}$-magic squares of Figure \ref{figure3x3} is 3.
\end{lemma}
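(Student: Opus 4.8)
The plan is to reduce the definition of $i$-complexity to an elementary incidence problem on the $3\times 3$ integer grid. Since the coefficient of $a$ in each of the nine forms equals $1$, any $\mathbb{Q}$-linear combination of a subcollection that equals $\psi_i$ must have coefficients summing to $1$ (compare the $a$-coordinate); consequently $\psi_i$ lies in the affine-linear span over $\mathbb{Q}$ of $\{\psi_j\}_{j\in J}$ if and only if the reduced point $p_i:=(\text{coefficient of }b,\ \text{coefficient of }c)\in\mathbb{Q}^2$ lies in the affine span of $\{p_j\}_{j\in J}$. The nine reduced points are exactly the nine points of $\{-1,0,1\}^2$, and $\psi_5=(1,0,0)$ corresponds to the centre $(0,0)$. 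Hence, for fixed $i$, the system has $i$-complexity at most $s$ precisely when the eight grid points other than $p_i$ can be split into $s+1$ classes, each of which is either a single point or a collection of collinear points lying on a line that avoids $p_i$.

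For the upper bound I would prove, uniformly in $i$, that four classes always suffice. Every grid point lies on exactly one of the three ``rows'' $\{(b,c):c=k\}$, $k\in\{-1,0,1\}$, each of which has three points, so two of these three rows avoid $p_i$. Take those two rows as two classes ($6$ valid points in all) and the remaining two points --- the other two points on $p_i$'s own row --- as two singleton classes. This is a partition into four valid classes, so the $i$-complexity is at most $3$ for every $i$, and therefore the complexity of the system is at most $3$.

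For the lower bound I would take $i=5$, so $p_5=(0,0)$, and show that three classes never suffice for it. Since $\{-1,0,1\}^2$ has no four collinear points, each valid class contains at most three points, so a partition of the eight non-central points into three classes must have class sizes $3,3,2$. The only three-point lines of the grid that avoid the centre are the two outer rows and the two outer columns; among these four lines the only disjoint pairs are ``the two rows'' and ``the two columns''. In the first case the two leftover points are $(-1,0)$ and $(1,0)$, in the second case $(0,-1)$ and $(0,1)$, and in both cases these two points are collinear with the centre, so they cannot form the third class. Hence the $5$-complexity is at least $3$, the complexity of the system is at least $3$, and combining with the upper bound it equals $3$.

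The routine ingredients are the grid reformulation in the first paragraph and the explicit four-class partitions; the only step that needs genuine care is the lower bound, namely the exhaustion of all possible three-class partitions for $\psi_5$, which however collapses to the short case analysis above once one notes that the grid contains no four collinear points and that its centre lies on the line through any two antipodal grid points.
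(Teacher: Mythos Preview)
Your proof is correct and takes a genuinely different route from the paper's. The paper establishes the upper bound by brute force: for each of the nine forms it writes down an explicit partition of the remaining eight into four pairs and leaves the reader to check that the chosen form is not in the linear span of any pair. For the lower bound it again argues directly with the forms, tracking which pairs such as $(1,0,1),(1,0,-1)$ and $(1,1,0),(1,-1,0)$ must be separated and chasing the resulting constraints through several cases. Your reduction to affine incidence on the $3\times 3$ grid $\{-1,0,1\}^2$ is the key simplification: once one observes that all nine forms share the coefficient $1$ on $a$, the question of whether $\psi_i$ lies in the linear span of a subcollection becomes exactly whether the point $p_i$ lies in the affine hull of the corresponding grid points. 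This buys you a uniform, one-line upper bound (two horizontal lines plus two singletons) in place of nine separate verifications, and a clean lower bound via the pigeonhole observation that three classes of size at most $3$ summing to $8$ force the pattern $3,3,2$, after which the enumeration of three-point grid lines missing the centre finishes the argument. The paper's approach has the minor advantage of recording explicit witnessing partitions for every form, but yours is shorter, more conceptual, and makes transparent why the centre form $\psi_5$ is the obstruction.
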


\begin{proof}
First of all, we show that the $i$-complexity of every form is at most 3. In order to see it, given the form $i$, we split the others into four sets in such a way that the form $i$ cannot be written as a linear combination over $\mathbb{Q}$ of the forms in any of the four sets. We write explicitly the four sets for every form:
\begin{itemize}
\item $(1,1,0)$-complexity at most 3: $\{(1,0,0),(1,0,1)\}$, $\{(1,-1,0),(1,0,-1)\}$,

$\{(1,1,1),(1,-1,1)\}$, $\{(1,1,-1),(1,-1,-1)\}$.
\item $(1,-1,-1)$-complexity at most 3: $\{(1,1,0),(1,-1,0)\}$, $\{(1,0,1),(1,0,-1)\}$,

$\{(1,0,0),(1,-1,1)\}$, $\{(1,1,-1),(1,1,1)\}$.
\item $(1,0,1)$-complexity at most 3: $\{(1,0,0),(1,1,0)\}$, $\{(1,-1,0),(1,0,-1)\}$,

$\{(1,1,1),(1,-1,-1)\}$, $\{(1,-1,1),(1,1,-1)\}$.
\item $(1,-1,1)$-complexity at most 3: $\{(1,1,0),(1,-1,0)\}$, $\{(1,0,1),(1,0,-1)\}$,

$\{(1,1,1),(1,1,-1)\}$, $\{(1,-1,-1),(1,0,0)\}$.
\item $(1,0,0)$-complexity at most 3: $\{(1,1,0),(1,0,1)\}$, $\{(1,-1,0),(1,0,-1)\}$,

$\{(1,1,1),(1,-1,1)\}$, $\{(1,1,-1),(1,-1,-1)\}$.
\item $(1,1,-1)$-complexity at most 3: $\{(1,1,0),(1,-1,0)\}$, $\{(1,0,1),(1,0,-1)\}$,

$\{(1,1,1),(1,-1,1)\}$, $\{(1,-1,-1),(1,0,0)\}$.
\item $(1,0,-1)$-complexity at most 3: $\{(1,0,0),(1,1,0)\}$, $\{(1,-1,0),(1,0,1)\}$,

$\{(1,1,1),(1,-1,-1)\}$, $\{(1,-1,1),(1,1,-1)\}$.
\item $(1,1,1)$-complexity at most 3: $\{(1,1,0),(1,-1,0)\}$, $\{(1,0,1),(1,0,-1)\}$,

$\{(1,0,0),(1,-1,1)\}$, $\{(1,1,-1),(1,-1,-1)\}$.
\item $(1,-1,0)$-complexity at most 3: $\{(1,0,0),(1,0,1)\}$, $\{(1,1,0),(1,0,-1)\}$,

$\{(1,1,1),(1,-1,1)\}$, $\{(1,1,-1),(1,-1,-1)\}$.
\end{itemize}

\bigskip

So the complexity of the system is at most 3. Now we will prove that, for example, the $(1,0,0)$-complexity has to be 3 and no less and this will prove that the complexity of the system is 3. We suppose for a contradiction that we can split the other eight forms in three sets in such a way that $(1,0,0)$ is not in the linear span of any of these sets. Since $(1,0,0)$ is in the linear span over $\mathbb{Q}$ of $(1,0,1)$ and $(1,0,-1)$, these two forms have to be in a different set, and the same happens with $(1,1,0)$ and $(1,-1,0)$. Since we only have three sets, two of these four forms have to be in the same set, so we have that one form of the shape $(1,*,0)$ and one of the shape $(1,0,*)$ are in the same set, where $*$ can be $1$ or $-1$. In that set we cannot have any other form, since $(1,0,0)$ would be in the linear span of them together with any $(1,*,*)$. So the first of our three sets is of the form $\{(1,*,0),(1,0,*)\}$. Now, since $(1,0,0)$ is in the linear span over $\mathbb{Q}$ of $(1,1,1)$ and $(1,-1,-1)$, these two forms have to be in a different set, and the same happens with $(1,-1,1)$ and $(1,1,-1)$. Then, the two forms of the shape $(1,*,0)$ and $(1,0,*)$ that are not in our first set, must be in different sets, since if they are in the same set they would be the only two forms in that set and then we would have more than three sets. Then a form of the shape $(1,*,0)$ is in the second set and a form of the shape $(1,0,*)$ is in the third. Now, either $(1,1,1)$ and $(1,-1,1)$ are in one of these sets and $(1,-1,-1)$ and $(1,1,-1)$ in the other or $(1,1,1)$ and $(1,1,-1)$ are in one these sets and $(1,-1,-1)$ and $(1,-1,1)$ in the other. In the first case, $(0,1,0)$ is in the linear span of both sets, and $(1,0,0)$ in the linear span of the set with the form $(1,*,0)$, which is a contradiction. In the second case, $(0,0,1)$ is in the linear span of both sets, and $(1,0,0)$ in the linear span of the set with the form $(1,0,*)$, which is also a contradiction.
\end{proof}

\bigskip

This tell us two things, that complexity can be somewhat complicated and that for $3 \times 3$ squares we need very ``strong'' and recent results.

\subsection{Complexity for 4 $\times$ 4 $\mathbb{Z}$-magic squares}

We write the vectors of the $\mathbb{Z}$-basis of Figure \ref{figure4} as columns of a matrix (see Figure \ref{matele4}) and, looking at the rows, we have the coefficients of the 16 linear forms, $\psi_1, \ldots, \psi_{16}$, we are interested in. We identify each form with its corresponding row vector in $\mathbb{Z}^8$. Because of the way we constructed the basis, observe that $\psi_1, \ldots, \psi_7$ and $\psi_9$ are the vectors of the canonical basis of the 8-dimensional space.

\begin{figure}[ht]
\begin{center}
\scalebox{0.7}{
$ \left( \begin{array}{cccccccc}
1 & 0 & 0 & 0 & 0 & 0 & 0 & 0 \\
0 & 1 & 0 & 0 & 0 & 0 & 0 & 0 \\
0 & 0 & 1 & 0 & 0 & 0 & 0 & 0 \\
0 & 0 & 0 & 1 & 0 & 0 & 0 & 0\\
0 & 0 & 0 & 0 & 1 & 0 & 0 & 0\\
0 & 0 & 0 & 0 & 0 & 1 & 0 & 0\\
0 & 0 & 0 & 0 & 0 & 0 & 1 & 0\\
1 & 1 & 1 & 1 & -1 & -1 & -1 & 0\\
0 & 0 & 0 & 0 & 0 & 0 & 0 & 1\\
1 & 0 & 0 & -1 & 1 & 0 & -1 & 1\\
0 & 1 & 1 & 2 & -1 & -1 & 0 & -1\\
0 & 0 & 0 & 0 & 0 & 1 & 1 & -1\\
0 & 1 & 1 & 1 & -1 & 0 & 0 & -1\\
0 & 0 & 1 & 2 & -1 & -1 & 1 & -1\\
1 & 0 & -1 & -1 & 1 & 1 & -1 & 1\\
0 & 0 & 0 & -1 & 1 & 0 & 0 & 1
\end{array} \right) $
}
\end{center}
\caption{Matrix with the vectors of the elephant basis as columns for $n = 4$.}
\label{matele4}
\end{figure}

We analyse the $\psi_i$-complexity of each form:
\begin{itemize}
\item $\psi_1$, $\psi_8$, $\psi_{10}$ and $\psi_{15}$-complexities are at most 1 because $\psi_1$, $\psi_8$, $\psi_{10}$ and $\psi_{15}$ are linearly independent and not in the linear span of the rest (all of them have a value different from 0 in their first position and all the rest have a zero in that position).
\item $\psi_2$, $\psi_3$, $\psi_{11}$, $\psi_{13}$ and $\psi_{14}$-complexities are at most 1 because $\psi_2$, $\psi_3$, $\psi_8$, $\psi_{11}$, $\psi_{13}$, $\psi_{14}$ and $\psi_{15}$ are linearly independent and not in the linear span of the rest (all of them have a value different from 0 in their second or third positions and all the rest have a zero in both of these positions).
\item $\psi_{4}$ and $\psi_{16}$-complexities are at most 1 because $\psi_4$, $\psi_8$, $\psi_{10}$, $\psi_{11}$, $\psi_{13}$, $\psi_{14}$, $\psi_{15}$ and $\psi_{16}$ are linearly independent and not in the linear span of the rest (all of them have a value different from 0 in their fourth position and all the rest have a zero in that position).
\item $\psi_{5}$ and $\psi_{9}$-complexities are at most 1 because $\psi_5$, $\psi_6$, $\psi_7$, $\psi_9$, $\psi_{11}$, $\psi_{13}$, $\psi_{14}$ and $\psi_{15}$ are linearly independent and $\psi_{5}$ and $\psi_{9}$ are not in the linear span of the other eight forms.
\item $\psi_{6}$ and $\psi_{12}$-complexities are at most 1 because $\psi_6$, $\psi_8$, $\psi_{11}$, $\psi_{12}$, $\psi_{14}$ and $\psi_{15}$ are linearly independent and not in the linear span of the rest (all of them have a value different from 0 in their sixth position and all the rest have a zero in that position).
\item $\psi_{7}$-complexity is at most 1 because $\psi_7$, $\psi_8$, $\psi_{10}$, $\psi_{12}$, $\psi_{14}$ and $\psi_{15}$ are linearly independent and not in the linear span of the rest (all of them have a value different from 0 in their seventh position and all the rest have a zero in that position).
\end{itemize}

So the $\psi_i$-complexity of every form is at most 1 (and it cannot be less since the 16 linear forms are not, of course, linearly independent). Then:

\begin{lemma}
The complexity of the system of linear forms given by the $\mathbb{Z}$-basis for $4 \times 4$ $\mathbb{Z}$-magic squares of Figure \ref{figure4} is 1.
\end{lemma}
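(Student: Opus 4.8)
The plan is to combine the two halves of the argument that have just been carried out. For the upper bound, the itemized case analysis already shows that for each $i \in \{1, \ldots, 16\}$ one can split the remaining fifteen forms into exactly two sets, none of whose $\mathbb{Q}$-spans contains $\psi_i$; hence the $\psi_i$-complexity is at most $1$ for every $i$, and so the complexity of the whole system is at most $1$. Concretely, for a given $\psi_i$ one selects a coordinate $k$ at which $\psi_i$ does not vanish, places into the first set the (few) other forms that are nonzero in coordinate $k$ — these are chosen together with $\psi_i$ to be linearly independent, so $\psi_i$ is not in their span — and places into the second set all the forms that vanish in coordinate $k$, whose $\mathbb{Q}$-span consists only of vectors with zero $k$-th coordinate and therefore cannot contain $\psi_i$. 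The one thing still to check is that each of the small spanning families listed is genuinely linearly independent (an explicit determinant computation of size at most $8$), which is routine given the matrix of Figure~\ref{matele4}.

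For the lower bound it suffices to exhibit a single form lying in the $\mathbb{Q}$-span of the other fifteen, which rules out complexity $0$. Since the sixteen forms live in an $8$-dimensional space they are certainly linearly dependent, and an explicit relation can be read off directly from the construction of the basis: because $\psi_1, \ldots, \psi_7$ are the first seven vectors of the canonical basis of $\mathbb{Z}^8$ and $\psi_8 = (1,1,1,1,-1,-1,-1,0)$, we have $\psi_8 = \psi_1 + \psi_2 + \psi_3 + \psi_4 - \psi_5 - \psi_6 - \psi_7$. Thus $\psi_8$ lies in the linear span of $\{\psi_1, \ldots, \psi_7, \psi_9, \ldots, \psi_{16}\}$, so the $\psi_8$-complexity is at least $1$, and hence the complexity of the system is at least $1$.

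Putting the two bounds together yields complexity exactly $1$. I expect no serious obstacle: the upper-bound half is already essentially complete and only requires the bookkeeping of the independence checks, and the lower-bound half is immediate once the relation above is written down. The only mild point of care in the upper bound is to make sure each partition used really has just $s+1 = 2$ parts — that is, that all forms vanishing at the chosen coordinate can safely be lumped into a single set while all the others, together with $\psi_i$, form a linearly independent family — which is exactly what the itemized list records.
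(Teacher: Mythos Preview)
Your proposal is correct and follows essentially the same approach as the paper: the upper bound comes directly from the itemized case analysis (partition into two sets, with a coordinate/independence check ensuring $\psi_i$ avoids each span), and the lower bound is the observation that the sixteen forms are linearly dependent in $\mathbb{Q}^8$. Your explicit relation $\psi_8 = \psi_1+\psi_2+\psi_3+\psi_4-\psi_5-\psi_6-\psi_7$ is a nice concretization of the paper's one-line remark, and your summary of the partition strategy is accurate for most cases, though note that for $\psi_5$ and $\psi_9$ the paper's itemized list actually uses a direct independence check rather than a single-coordinate argument --- since you defer to the list itself for the details, this does not affect correctness.
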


\subsection{Complexity for $n \times n$ $\mathbb{Z}$-magic squares for $n \ge 5$}
Recall that the ``white'' rows in Figure \ref{matelebasis} give the coefficients of the nontrivial linear forms of the elephant basis for every $n \ge 5$.

\begin{lemma}\label{lemacoefnont}
By elementary row operations in the $2n \times (n^2 - 2n)$ matrix with the coefficients of the nontrivial linear forms of the elephant basis as rows (matrix formed by the ``white'' rows in Figure \ref{matelebasis}), the matrix of Figure \ref{nonteche} can be obtained.
\end{lemma}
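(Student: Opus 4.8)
The plan is to carry out the row reduction explicitly, taking advantage of the very regular shape of the nontrivial rows recorded in Figure \ref{matelebasis}. It is convenient to group the $n^{2}-2n$ columns as in that figure: the first $n$ columns $C_{1},\dots,C_{n}$ (coming from the first row of the square, so that their entries have magic sum $1$), then $n-3$ consecutive blocks $B_{2},\dots,B_{n-2}$ of $n-1$ columns each (coming from the skeleton positions in rows $2$ through $n-2$), and finally the $n-3$ trailing columns (coming from the skeleton positions in row $n-1$). With this bookkeeping the nontrivial rows fall into two rigid families plus two exceptional rows. By the proof of Lemma \ref{lemaele}, each of the $n-3$ rows encoding the last entry of a row $k$ with $2\le k\le n-2$ equals the magic sum minus the sum of the first $n-1$ entries of that row, hence has all entries $1$ on $C_{1},\dots,C_{n}$, all entries $-1$ on the single block $B_{k}$, and $0$ elsewhere; each of the $n-3$ rows encoding the last entry of a column from $\{1,3,\dots,n-3,n-1\}$ has an equally rigid pattern on $C_{1},\dots,C_{n}$ together with a single $-1$ in the appropriate position of each middle block and in one trailing column; and only the two rows flagged in the text (``the third and $(n+3)$-rd from the bottom'' of Figure \ref{matelebasis}) carry the shifting $\pm1$ pattern across the middle blocks.

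The reduction then proceeds in three stages. First, using one row that is all-ones on $C_{1},\dots,C_{n}$ — say the one for the last entry of row $2$, which is all-ones there and $-\mathbf{1}$ on $B_{2}$ — I subtract suitable multiples to clear the columns $C_{1},\dots,C_{n}$ from every other nontrivial row, so that afterwards only one row still meets the first group of columns, in accordance with the shape of the matrix of Figure \ref{nonteche}. Second, the $n-3$ rows for the last entries of rows $2,\dots,n-2$ are now supported on a single block with all entries $-1$; after scaling by $-1$ they become the clean ``one block of ones'' rows of Figure \ref{nonteche}, and combining them with the already-cleared rows that share a block index strips those rows down to their prescribed support as well. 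Third, the remaining rows — the last entries of columns $1$ and $n-1$, the bottom-right corner, positions $2$ and $n-2$ of row $n-1$, the last entry of row $n-1$, and positions $2$ and $n-2$ of the last row — are reduced one at a time in the order in which these entries were determined in the proof of Lemma \ref{lemaele}, since each such entry was obtained from previously determined ones by a single row, column or diagonal sum, and that relation is exactly one elementary row operation on the coefficient matrix.

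The main obstacle is this third stage, and within it the bookkeeping for the two exceptional rows: the operations of the first two stages act on them too, and one has to follow their effect carefully and check that, together with a few further operations among the rows supported in the last $n-3$ columns, the outcome is precisely the matrix of Figure \ref{nonteche}. The slight irregularities of the shifting pattern near its two ends, and the smallest case $n=5$ (where some of the blocks degenerate), are best verified separately. Everything else is routine linear algebra with entries in $\{0,\pm1,\pm2\}$; in particular no denominators are ever introduced, in agreement with the fact established in Lemma \ref{lemaele} that the elephant basis is genuinely a $\mathbb{Z}$-basis.
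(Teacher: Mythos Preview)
Your approach is genuinely different from the paper's, and considerably more laborious. The paper does not row-reduce at all: it simply exhibits an explicit $2n\times 2n$ matrix of integers whose rows record, for each row of the target matrix in Figure~\ref{nonteche}, the coefficients of the linear combination of the $2n$ nontrivial rows producing it. The lemma then reduces to a single matrix multiplication, which can be verified directly against the patterns in Figures~\ref{matelebasis} and~\ref{nonteche}. That is the whole proof.

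What you have written is a plan for a step-by-step Gaussian elimination, not a proof. You correctly identify the structure of the $2n$ nontrivial rows and the first two stages are plausible, but you yourself flag the third stage---tracking the two exceptional rows with the shifting $\pm1$ pattern through all prior operations and matching the result to Figure~\ref{nonteche}---as ``the main obstacle'', and you do not carry it out. Saying that the boundary cases and $n=5$ ``are best verified separately'' and that ``everything else is routine'' is precisely where a referee would ask to see the computation, because that bookkeeping is the entire content of the lemma. If you want to keep your constructive approach, you need to actually produce, for each of those last rows, the explicit sequence of operations (or equivalently the final linear combination) and check it against the target; at that point you will have reconstructed the paper's coefficient matrix by hand, which suggests that exhibiting it directly is the cleaner route.
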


\begin{proof}
The rows of the matrix
\begin{figure}[h]
\begin{center}
\includegraphics[width=0.5\textwidth]{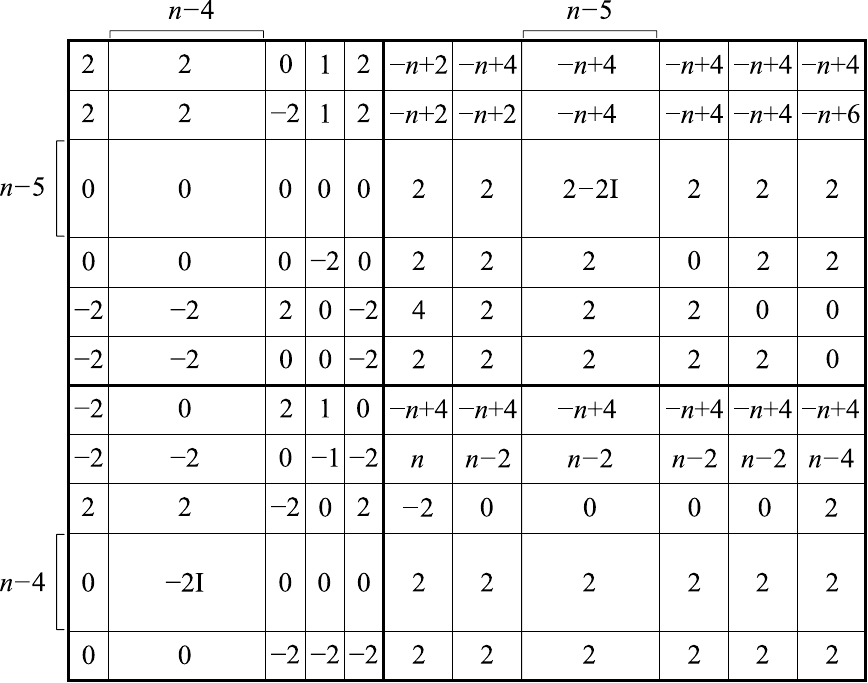}
\end{center}
\label{coef}
\end{figure}

\noindent list the coefficients of the linear combination of the rows in the matrix of coefficients of the nontrivial linear forms that give the rows of the matrix in Figure \ref{nonteche}.
\end{proof}

\begin{proposition}
The complexity of the system of linear forms that define $n \times n$ $\mathbb{Z}$-magic squares given by the elephant basis for $n \ge 5$ is 1. 
\end{proposition}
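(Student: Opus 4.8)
The plan is to bound the complexity from below and from above separately; since the complexity takes values in $\mathbb{N}\cup\{\infty\}$, the two bounds together will pin it down. For the lower bound, note that the elephant basis produces $n^2$ linear forms living in $\mathbb{Q}^{n^2-2n}$, and $n^2>n^2-2n$, so these forms are linearly dependent; hence at least one of them lies in the $\mathbb{Q}$-linear span of the others, and by Definition \ref{complexity} this already forces the complexity to be at least $1$ (a system of linear forms has complexity $0$ precisely when all its forms are linearly independent). So the real content is the upper bound, which I would establish exactly in the spirit of the bookkeeping carried out above for $n=4$: I must show that the $\psi_i$-complexity of every one of the $n^2$ forms is at most $1$.

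The uniform mechanism I would use is the following. Given a form $\psi_i$, look for a set $T$ of skeleton positions (i.e. coordinates of $\mathbb{Q}^{n^2-2n}$) such that $\psi_i$ has a nonzero entry in some coordinate belonging to $T$, while the collection
$$\mathcal{L}(T):=\{\psi_i\}\cup\{\psi_j:\ j\neq i,\ \psi_j\text{ has a nonzero entry in some coordinate of }T\}$$
is linearly independent. Granting such a $T$, split the remaining $n^2-1$ forms as $S_1=\{\psi_j:\ j\neq i,\ \psi_j\text{ vanishes on all of }T\}$ and $S_2=\mathcal{L}(T)\setminus\{\psi_i\}$. Every form of $S_1$ lies in the proper subspace $\{x_q=0:\ q\in T\}$, which does not contain $\psi_i$, so $\psi_i\notin\mathrm{span}(S_1)$; and $\psi_i\notin\mathrm{span}(S_2)$ because $\mathcal{L}(T)$ is linearly independent. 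Hence $\psi_i$-complexity $\le 1$, and doing this for all $i$ gives complexity $\le 1$.

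It thus remains to produce a suitable $T$ for each form. For the $n^2-2n$ trivial forms $\psi_i=e_p$ the natural candidate is a small set $T\ni p$ — in the simplest case $T=\{p\}$, so that $\mathcal{L}(T)$ consists of $e_p$ together with those nontrivial forms whose $p$-th coordinate is nonzero; these are read off directly from the matrix of Figure \ref{matelebasis}, and one checks their independence. For the $2n$ nontrivial forms I would invoke Lemma \ref{lemacoefnont}: the row-echelon matrix of Figure \ref{nonteche} exhibits, for each nontrivial form, a column in which that form is essentially isolated among the nontrivial ones, which is precisely the data needed both to choose $T$ and to see that $\mathcal{L}(T)$ is independent. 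Because the matrices of Figures \ref{matelebasis} and \ref{nonteche} are built from a block pattern repeating in $n$, these verifications are uniform in $n$, so a fixed finite amount of checking disposes of all $n\ge 5$ at once. The main obstacle is exactly this last point: arranging the choices of $T$ and the accompanying linear-independence checks so that they genuinely do not depend on $n$; the explicit echelon form of Figure \ref{nonteche} was proved in Lemma \ref{lemacoefnont} for this very reason, playing here the role that the ad hoc groupings of the $\psi_i$ into linearly independent subsets played in the $n=4$ argument.
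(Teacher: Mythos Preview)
Your framework is sound for the lower bound and for most of the upper bound, and the support--based splitting you propose is exactly the mechanism used in the $n=4$ case and for the bulk of the paper's $n\ge 5$ argument. But there is one form for which your scheme \emph{cannot} succeed, and this is the same form the paper is forced to treat separately: the trivial form $\psi_n=e_n$ sitting at the top--right corner of the square.

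Here is why your mechanism breaks there. Since $e_n$ has its unique nonzero coordinate at position~$n$, any admissible $T$ must contain $n$, and hence $\mathcal{L}(T)\supseteq\mathcal{L}(\{n\})=\{e_n\}\cup\{\text{nontrivial forms with nonzero $n$-th coordinate}\}$. Now compute the $n$-th column of the elephant--basis matrix: it is the magic square with a $1$ at position $(1,n)$ and zeros at all other skeleton positions, and following the reconstruction procedure of Lemma~\ref{lemaele} one finds that \emph{every} non-skeleton entry of this square is nonzero (for instance the entries in the last column of rows $2,\ldots,n-2$ are all $1$, the entry at $(n-1,2)$ is $-1$, the entry at $(n-1,n-2)$ is $n$, the entry at $(n-1,n)$ is $2-n$, and the entries in row $n$ are $1,2,1,\ldots,1,1-n,1,1$). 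Thus \emph{all} $2n$ nontrivial forms have nonzero $n$-th coordinate, so $\mathcal{L}(\{n\})$ already contains $e_n$ together with every nontrivial form; and since $e_n$ lies in the span of the nontrivial forms, $\mathcal{L}(\{n\})$ is linearly dependent. Enlarging $T$ only enlarges $\mathcal{L}(T)$, so no choice of $T$ rescues the argument. Even if you weaken your requirement from ``$\mathcal{L}(T)$ independent'' to merely ``$\psi_i\notin\mathrm{span}(S_2)$'', the same obstruction applies: $S_2$ always contains all the nontrivial forms, whose span contains $e_n$.

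The paper's proof gets around this by abandoning the support--based split for $e_n$: it moves one carefully chosen nontrivial form (one appearing with nonzero coefficient in the unique expression of $e_n$ as a combination of nontrivial forms) out of the ``independent'' side and into the other side, compensating by placing $e_1$ and $e_3$ with the remaining nontrivial forms. That split is not of the shape ``forms supported on $T$ versus forms vanishing on $T$'', so your general mechanism needs to be supplemented by an ad hoc argument for this single exceptional form.
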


\begin{proof}
From the previous result, and looking at Figure \ref{nonteche}, we know that the $2n$ nontrivial linear forms of the elephant basis are linearly independent (the $2n$ columns marked in gray have zeros in every position except from one).

\bigskip

Given any trivial linear form different from the $n$-th one, it is impossible to write it as a linear combination of the rows of the matrix in Figure \ref{nonteche} (and so as a linear combination of the nontrivial forms): If such a linear combination exists and we look at the positions corresponding to the columns in grey in Figure \ref{nonteche} at most one of them will be different from zero. This implies that all the coefficients of the linear combination except from at most one are zero. Now, if the trivial linear form is different from the $n$-th one, it cannot be obtained as a linear combination with only one nonzero coefficient of the rows in the matrix of Figure \ref{nonteche} because none of them is proportional to a trivial linear form.

\begin{landscape}
\begin{figure}
\begin{center}
\includegraphics[width=1.4\textwidth]{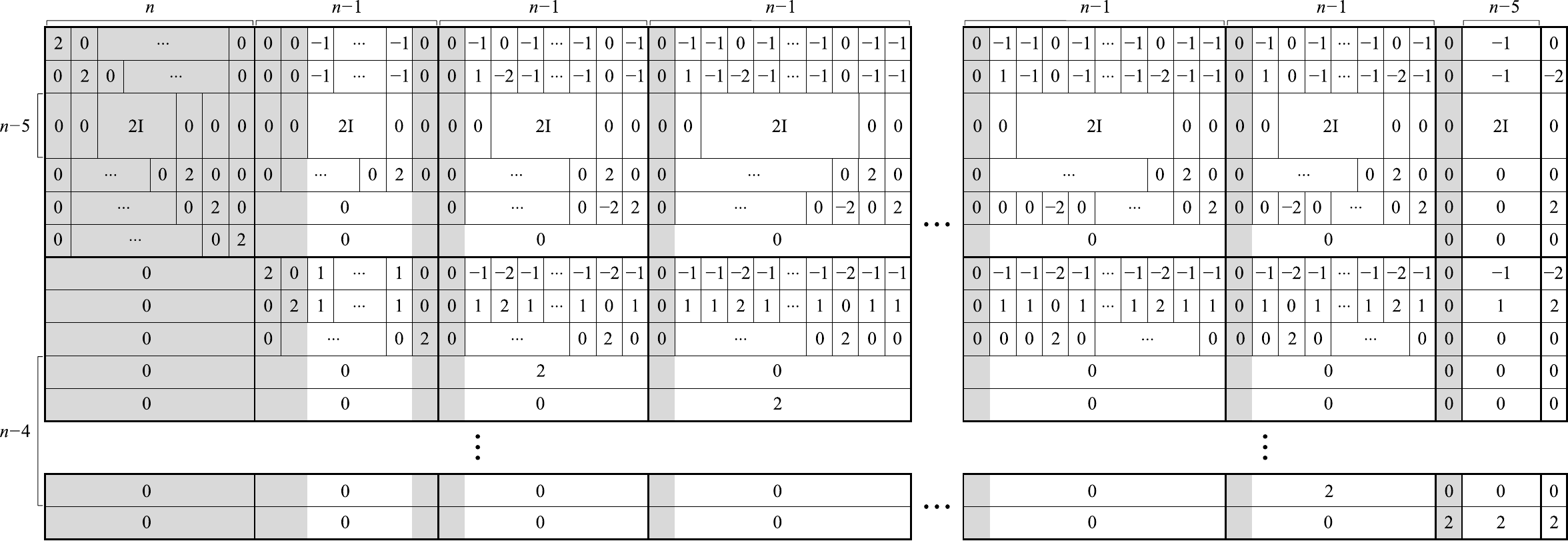}
\end{center}
\caption{Row reduced form of the matrix with the coefficients of the nontrivial linear forms of the elephant basis as rows.}
\label{nonteche}
\end{figure}
\end{landscape}

If we join the first and third trivial linear forms ($(1,0,\ldots,0)$ and $(0,0,1,0,\ldots,0)$) to the nontrivial linear forms, we have a set of $2n + 2$ linearly independent forms: The trivial form $(0,0,1,0,\ldots,0)$, say, is linearly independent with all the nontrivial ones by the last paragraph argument. Now, suppose for a contradiction that we had a linear combination of this linear form and all the nontrivial ones that gave $(1,0,\ldots,0)$ as a result. Then we would also have a linear combination of $(0,0,1,0,\ldots,0)$ and the rows of the matrix of Figure \ref{nonteche} giving $(1,0,\ldots,0)$. The coefficients of this linear combination would be all zero except from, at most, the ones of the first and the third row of the matrix of Figure \ref{nonteche} and the one of $(0,0,1,0,\ldots,0)$. The coefficient of the first row would have to be different from zero. But this row has nonzero entries in positions were the third row and $(0,0,1,0,\ldots,0)$ have zero entries. Then, the linear combination giving $(1,0,\ldots,0)$ is not possible.

\bigskip

Also, from the fact that the nontrivial linear forms together with $(1,0,\ldots,0)$ and $(0,0,1,0,\ldots,0)$ are linearly independent, we can deduce that there is only one possible linear combination of them giving the $n$-th trivial linear form (the coefficients of $(1,0,\ldots,0)$ and $(0,0,1,0,\ldots,0)$ are 0 and the rest are given by the $n$-th row of the figure of the proof of Lemma \ref{lemacoefnont}). Then, if we take out of this set of forms the first nontrivial one (which had a nonzero coefficient in that linear combination), it will not be possible to obtain the $n$-th trivial form as a linear combination.

\bigskip

Finally, none of the nontrivial forms can be obtained as a linear combination of all but the first and third linear forms, because every nontrivial linear form has a nonzero entry either in the first or in the third position.

\bigskip

These things prove that:
\begin{itemize}
\item Given any trivial linear form different from the $n$-th one, its complexity is at most one because we can divide the rest in the set of trivial and the set of nontrivial ones and it will not be in the linear span of any of these sets.

\item Given the $n$-th trivial form, its complexity is at most one because we can construct the next two sets with the rest of the forms: the first one with the first and third trivial forms and with all the nontrivial forms except from the first one; the second one with the rest of the linear forms.

\item Given any nontrivial linear form, its complexity is at most one because we can divide the rest of the forms in the next two sets: the first one with the first and third trivial forms and with all the other nontrivial forms; the second one with the rest of the trivial forms. \qedhere
\end{itemize}
\end{proof}

\section{Volumes of polytopes}\label{sec4vol}
We already have systems of linear forms $\Psi: \mathbb{Z}^{n^2-2n} \rightarrow \mathbb{Z}^{n^2}$ for every $n$ and we know their complexity. Now, we have to choose a suitable convex body $K \subset [-N,N]^{n^2-2n}$ in Conjecture \ref{conjeHL}. Remember that for our problem we know that Conjecture \ref{conjeHL} is a theorem, since we have finite complexity for every $n \ge 3$. Recall that we are interested in counting the number of $n \times n$ $\mathbb{Z}$-magic squares with their entries being primes in $[0,N]$.

\bigskip

Then, for each $n \ge 3$ and $N \ge 0$, we will have $K = K_n(N) = \{x \in \mathbb{R}^{n^2-2n} : 0 \le \psi_i(x) \le N \text{ for }  i = 1, \ldots, n^2\}$, where the coefficients of $\psi_i$ are given in figures \ref{figure3x3} (for $n=3$), \ref{figure4} (for $n=4$) and \ref{matelebasis} (for $n\ge 5$), and the term $|K \cap \mathbb{Z}^{n^2-2n} \cap \Psi^{-1}(P^{n^2})|$ in Conjecture \ref{conjeHL} will be the quantity we are interested in. The first observation is that $K$ is convex since it is the intersection of $n^2$ convex sets.

\bigskip

We need to calculate the archimedean factor $\beta_{\infty, n} = \text{vol}_{n^2-2n}(K \cap \Psi^{-1}(\mathbb{R}_+^{n^2}))$. Since all the points in our $K$ belong also to $\Psi^{-1}(\mathbb{R}_+^{n^2})$, we have $\beta_{\infty, n} = \text{vol}_{n^2-2n}(K)$. Since $\Psi$ is a system of linear forms, the volume of $K_n(N)$ will be equal to $N^{n^2-2n}$ times the volume of the polytope defined by $K_n(1) = \{x \in \mathbb{R}^d : 0 \le \psi_i(x) \le 1 \text{ for }  i = 1, \ldots, n^2\}$. So, we only have to compute $K_n(1)$ for $n \ge 3$.

\bigskip

Observe that $K_n(1)$ is important in its own right. $\beta_{\infty, n} (N) = \text{vol}_{n^2-2n}(K_n(N))=N^{n^2-2n}\text{vol}_{n^2-2n}(K_n(1))$ is the volume of a convex polytope in $\mathbb{R}^{n^2-2n}$ which points with integer coordinates are in bijective correspondence with all the $n \times n$ $\mathbb{Z}$-magic squares with entries in $[0,N]$. For large $N$, the volume of $K_n(N)$ will be asymptotically the same as the number of integer points in it, so the volume of this polytope gives an asymptotic for the number of $n \times n$ $\mathbb{Z}$-magic squares with entries in $[0, N]$ when $N$ tends to infinity. We will talk about this in more detail in a moment.

\subsection{Volume for 3 $\times$ 3 $\mathbb{Z}$-magic squares}
We have to calculate the volume of the polytope $K_3(1)$ defined by the next 18 inequalities ($0 \le \psi_i(x) \le 1$ for $i = 1, \ldots, 9$): 
$0 \le a+b \le 1$, $0 \le a-b-c \le 1$, $0 \le a+c \le 1$, $0 \le a-b+c \le 1$, $0 \le a \le 1$, $0 \le a+b-c \le 1$, $0 \le a-c \le 1$, $0 \le a+b+c \le 1$, $0 \le a-b \le 1$. Each one of these pairs of inequalities defines a region bounded by two parallel planes and because we are in a tridimensional space we can ``see'' the polytope, which is the one of Figure \ref{tetraedro}. 

\bigskip

\begin{figure}
\begin{center}
\includegraphics[width=0.6\textwidth]{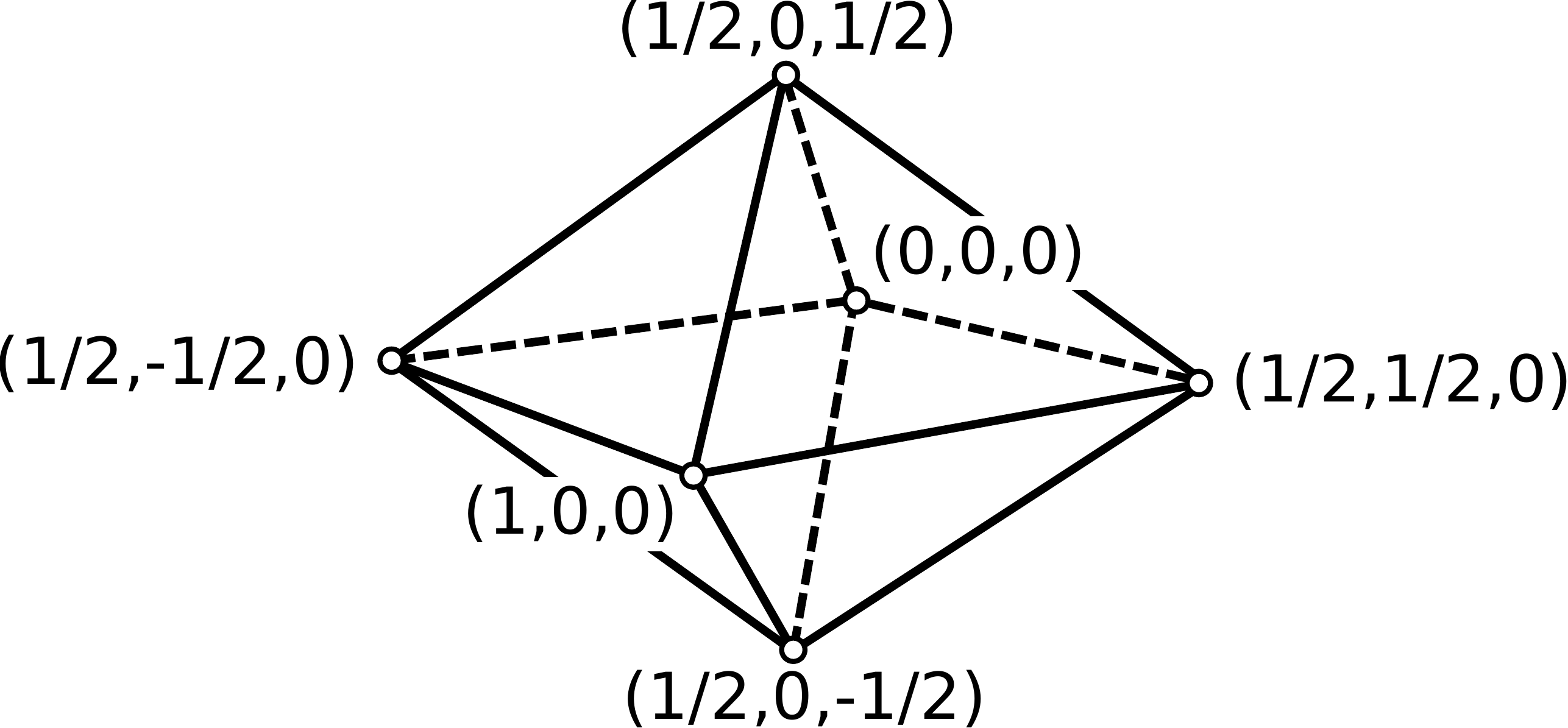}
\end{center}
\caption{Polytope $K_3(1)$.}
\label{tetraedro}
\end{figure}

Since the polytope is the disjoint union of two pyramids with area of their base equal to 1/2 and height 1/2, each pyramid will have volume 1/12 and the polytope will have volume 1/6.

\bigskip

So, $\beta_{\infty, 3} (N)= \frac{N^3}{6}$.

\bigskip

Before jumping to the case $n = 4$, we could think of a different approach to calculate the volume of this polytope, maybe one wich gives a better understanding or is more generalizable. An interesting thing to know will be the exact number of points with integer coordinates in $K = K_n(N)$ for every $N$. As we were discussing before, this will be the exact number of $n \times n$ $\mathbb{Z}$-magic squares with entries in $[0,N]$. We have to go to Ehrhart's theory, and look at its fundamental result (see, for example, \cite{BS} or \cite{E}):

\begin{theorem}
\emph{\textbf{(Ehrhart's Theorem)}} If $P$ is a convex polytope whose vertices have rational coordinates, then the number of points with integer coordinates in the dilations $NP$ for $N = 0, 1, 2 \ldots$ is a quasi-polynomial in $N$ whose degree is the dimension of $P$ and whose period divides the least common multiple of the denominators of the vertices of $P$.
\end{theorem}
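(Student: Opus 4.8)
The plan is to prove Ehrhart's Theorem by the classical cone method: triangulate $P$, pass to the cone over each simplex in one extra dimension, and read the lattice-point count off the resulting rational generating function.

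First I would reduce to a single simplex. One triangulates $P$ into finitely many $d$-dimensional simplices all of whose vertices are vertices of $P$, so that the denominators of their vertices still divide $q := \operatorname{lcm}$ of the denominators of the vertices of $P$. To make the lattice-point counts add up cleanly --- without an inclusion--exclusion over shared faces, which would bring in lower-dimensional pieces and obscure the degree and the period --- I would use a \emph{half-open} decomposition, writing $P$ as a disjoint union of half-open simplices $\Delta_1,\dots,\Delta_m$ so that $L_P(N):=|NP\cap\mathbb{Z}^d|=\sum_{j=1}^m |N\Delta_j\cap\mathbb{Z}^d|$ holds exactly. A finite sum of quasi-polynomials of degree $\le d$ with period dividing $q$ is again one of the same kind, so it suffices to treat one rational half-open $d$-simplex $\Delta$ with vertices $v_1,\dots,v_{d+1}$.

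Next I would build the cone. Embed $\mathbb{R}^d$ as the hyperplane $\{x_{d+1}=1\}\subset\mathbb{R}^{d+1}$, set $w_i=(v_i,1)$, and let $\tilde w_i=q_i w_i\in\mathbb{Z}^{d+1}$ be the primitive integer vector on the ray $\mathbb{R}_{\ge0}w_i$, with each $q_i\mid q$; let $C$ be the (half-open) simplicial cone generated by $\tilde w_1,\dots,\tilde w_{d+1}$. Slicing $C$ at height $x_{d+1}=N$ returns $N\Delta$, so that $E_\Delta(t):=\sum_{N\ge0}L_\Delta(N)t^N=\sum_{m\in C\cap\mathbb{Z}^{d+1}}t^{m_{d+1}}$. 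Since $C$ is simplicial, every lattice point of $C$ is uniquely $p+\sum_i n_i\tilde w_i$ with $n_i\in\mathbb{Z}_{\ge0}$ and $p$ in the bounded half-open parallelepiped $\Pi=\{\sum_i\lambda_i\tilde w_i:0\le\lambda_i<1\}$, which contains finitely many lattice points. Hence
\[
  E_\Delta(t)=\frac{\sigma(t)}{\prod_{i=1}^{d+1}(1-t^{q_i})},\qquad \sigma(t)=\sum_{p\in\Pi\cap\mathbb{Z}^{d+1}}t^{p_{d+1}},
\]
where $\sigma$ is a polynomial with $\sigma(0)=1$ and $\deg\sigma\le\big(\textstyle\sum_i q_i\big)-1$.

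Finally I would extract the quasi-polynomial. Rewriting the denominator over the common factor $(1-t^q)^{d+1}$ gives $E_\Delta(t)=h(t)/(1-t^q)^{d+1}$ with $\deg h\le q(d+1)-1<q(d+1)$. Now $[t^N](1-t^q)^{-(d+1)}$ equals $\binom{N/q+d}{d}$ when $q\mid N$ and $0$ otherwise --- a quasi-polynomial in $N$ of degree $d$ and period $q$ --- and multiplying the series by the polynomial $h$ only superposes finitely many integer shifts of this sequence; because $\deg h$ is strictly less than the denominator degree, no extra ``polynomial part'' appears, and so the coefficients $L_\Delta(N)$ are given by a single quasi-polynomial of degree $\le d$ and period dividing $q$, valid for every $N\ge0$. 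Summing over the $\Delta_j$ shows $L_P(N)$ is a quasi-polynomial with period dividing $\operatorname{lcm}$ of the denominators of the vertices of $P$, and its degree is exactly $\dim P=d$, since $L_P(N)/N^d\to\operatorname{vol}_d(P)\ne0$ as $N\to\infty$.

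The steps I expect to be delicate are the combinatorial one of realizing the triangulation as a genuine disjoint union of half-open simplices (so the counts add on the nose, avoiding inclusion--exclusion), and the analytic bookkeeping needed to upgrade \emph{eventual} quasi-polynomiality --- which is automatic for any denominator $\prod_i(1-t^{a_i})$ --- to quasi-polynomiality for \emph{all} $N\ge0$; this last point is exactly what the bound on $\deg h$, obtained by tracking $\deg\sigma$ through the cone construction and the common-denominator step, is there to secure.
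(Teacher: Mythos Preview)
The paper does not prove Ehrhart's Theorem; it is quoted as a known result with a reference to Beck--Robins \cite{BS} and to Ehrhart's original paper \cite{E}. So there is no ``paper's own proof'' to compare your attempt against.

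That said, your sketch is essentially the standard cone/generating-function proof found in \cite{BS}: triangulate, cone over each simplex, decompose lattice points via the half-open fundamental parallelepiped, and read off a rational generating function with denominator $\prod_i(1-t^{q_i})$ dividing $(1-t^q)^{d+1}$. The outline is sound, and the degree bound $\deg h<q(d+1)$ is exactly what forces quasi-polynomiality from $N=0$ rather than just eventually. One place to be a bit more careful: in the half-open setting the parallelepiped for a simplex with open facets indexed by $I$ is $\{\sum_j\lambda_j\tilde w_j:\lambda_j\in(0,1]\text{ for }j\in I,\ \lambda_j\in[0,1)\text{ for }j\notin I\}$, so your bound $\deg\sigma\le\sum_i q_i-1$ needs the observation that in a half-open triangulation of a full-dimensional polytope no simplex has \emph{all} facets removed (otherwise one could have $p_{d+1}=\sum_i q_i$). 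This is true but worth stating. Apart from that bookkeeping, the argument is correct and matches the cited reference.
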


In this case, since the least common multiple of the denominators of the vertices of $K_3(1)$ is 2, we will have that the Ehrhart quasipolymial has degree 3 and period at most 2. A direct way to obtain the quasipolynomial, $E_3(N)$, would be to interpolate it. For that we would need $(3 + 1) 2 = 8$ values. $E_3(N)$ is the number of points with integer coordinates, $x$, satisfying $0 \le \psi_i(x) \le N$ for $i = 1, \ldots, 9$ and, with a computer (or even by hand!), it is easy to compute:
\begin{displaymath}
\begin{array}{llll}
E_3(0) = 1 & \quad E_3(1) = 2 & \quad E_3(2) = 7 & \quad E_3(3) = 12\\
E_3(4) = 25 & \quad E_3(5) = 38 & \quad E_3(6) = 63 & \quad E_3(7) = 88
\end{array}
\end{displaymath}

Now, we can interpolate our quasipolynomial
\begin{displaymath}
   E_3(N) = \left\{
     \begin{array}{lr}
       N^3/6 + N^2/2 + 4N/3 + 1 & \quad \text{if} \quad N \equiv 0 \pmod{2}\\
       N^3/6 + N^2/2 + 5N/6 + 1/2 & \quad \text{if} \quad N \equiv 1 \pmod{2}
     \end{array}
   \right.
\end{displaymath} 
which gives the exact number of points with integer coordinates in $K_3(N)$, i. e. the exact number of $3 \times 3$ $\mathbb{Z}$-magic squares with entries in $[0,N]$. The coefficient of the leading term is $1/6$, exactly the volume of $K_3(1)$. This is not a coincidence, and it is a general fact that the leading coefficient of the Ehrhart quasipolynomial of a polytope is its volume. It is an inmediate consequence of the observation that the volume of a $d$-dimensional polytope can be thought as the limit when $N \rightarrow \infty$ of the number of integer points in its $N$-dilation divided by $N^d$. With all this in mind, we can go to the next case. 

\subsection{Volume for 4 $\times$ 4 $\mathbb{Z}$-magic squares} \label{vol4x4}
Our approach is now clear. We want to calculate the volume of the polytope $K_4(1)$ given by the 16 pairs of inequalities given by $0 \le \psi_i(x) \le 1$ for $i = 1, \ldots, 16$, where the coefficients of $\psi_i(x)$ are given by the $i$-th row of the matrix of Figure \ref{matele4}. We know that the volume is given by the leading coefficient of the quasipolynomial, $E_4(N)$, that gives the exact number of integer points satisfying $0 \le \psi_i(x) \le N$ for $i = 1, \ldots, 16$.

\bigskip

We will compute $E_4(N)$ by interpolation. The period of $E_4(N)$ divides the least common multiple of the denominators of the vertices of $K_4(N)$. So, in order to know how many values of $E_4(N)$ we need to interpolate it, we have to calculate the vertices of the polytope. There are different methods to compute the vertices of a polytope (see, for example, \cite{MR} or \cite{D} for surveys of vertex finding algorithms). The most direct of them would be to look at all the intersections of 8 of the 32 equalities  $\psi_i(x) = 0$ or $\psi_i(x) = 1$ for $i = 1, \ldots, 16$ consisting of a single point and then checking if the point satisfies all the inequalities $0 \le \psi_i(x) \le 1$ for $i = 1, \ldots, 16$. You do not want to do this by hand, but your computer will tell you that the 178 vertices of $K_4(1)$ are the ones listed in Appendix \ref{apena}. The only important thing for us is that the denominators of the coordinates of the vertices are one, two or three, so their least common multiple is 6. Then we are looking for a quasipolynomial, $E_4(N)$, of degree 8 and period dividing 6. In order to interpolate $E_4(N)$ we would then need $(8 + 1) 6 = 54$ values.

\bigskip

In Appendix \ref{apenb} we calculate the 54 needed values. With them we interpolate our quasipolynomial and we have

\bigskip

\scalebox{0.81}{
$   E_4(N) = \left\{
     \begin{array}{l}
\frac{8389 N^8}{120960} + \frac{8389 N^7}{15120} + \frac{5531 N^6}{2592} + \frac{10877 N^5}{2160} + \frac{8663 N^4}{1080} + \frac{14371 N^3}{1620} + \frac{143 N^2}{21} + \frac{1067 N}{315} + 1 \\
\text{if} \quad N \equiv 0 \pmod{6} \\
\\
\frac{8389 N^8}{120960} + \frac{8389 N^7}{15120} + \frac{5531 N^6}{2592} + \frac{10877 N^5}{2160} + \frac{69169 N^4}{8640} + \frac{57079 N^3}{6480} + \frac{4303 N^2}{672} + \frac{13607 N}{5040} + \frac{37}{128} \\
\text{if} \quad N \equiv 1 \pmod{6} \\
\\
\frac{8389 N^8}{120960} + \frac{8389 N^7}{15120} + \frac{5531 N^6}{2592} + \frac{10877 N^5}{2160} + \frac{8663 N^4}{1080} + \frac{14371 N^3}{1620} + \frac{143 N^2}{21} + \frac{1067 N}{315} + \frac{97}{81}\\
\text{if} \quad N \equiv 2 \pmod{6} \\
\\
\frac{8389 N^8}{120960} + \frac{8389 N^7}{15120} + \frac{5531 N^6}{2592} + \frac{10877 N^5}{2160} + \frac{69169 N^4}{8640} + \frac{57079 N^3}{6480} + \frac{4303 N^2}{672} + \frac{13607 N}{5040} + \frac{37}{128} \\
\text{if} \quad N \equiv 3 \pmod{6} \\
\\
\frac{8389 N^8}{120960} + \frac{8389 N^7}{15120} + \frac{5531 N^6}{2592} + \frac{10877 N^5}{2160} + \frac{8663 N^4}{1080} + \frac{14371 N^3}{1620} + \frac{143 N^2}{21} + \frac{1067 N}{315} + 1 \\
\text{if} \quad N \equiv 4 \pmod{6} \\
\\
\frac{8389 N^8}{120960} + \frac{8389 N^7}{15120} + \frac{5531 N^6}{2592} + \frac{10877 N^5}{2160} + \frac{69169 N^4}{8640} + \frac{57079 N^3}{6480} + \frac{4303 N^2}{672} + \frac{13607 N}{5040} + \frac{5045}{10368} \\
\text{if} \quad N \equiv 5 \pmod{6}
     \end{array}
   \right.$
}

\bigskip

\noindent which gives the exact number of points with integer coordinates in $K_4(N)$, i. e. the exact number of $4 \times 4$ $\mathbb{Z}$-magic squares with entries in $[0,N]$. The coefficient of the leading term, $8389/120960$, is the exact volume of $K_4(1)$ (one can confirm this value, for example, with \url{polymake}; see \url{http://www.polymake.org/}). You may wish to compare the quasipolynomials $E_3(N)$ and $E_4(N)$ with the quasipolynomials of period 2 computed in \cite{BCCG}, depending on the magic sum and not on a bound for the entries. Then, $\beta_{\infty, 4} (N) = \frac{8389 N^8}{120960}$.

\subsection{Volume for higher values of $n$} \label{volnxn}
It is reasonable to think that if for the case $3 \times 3$ we were able to interpolate our quasipolynomial by hand and for the case $4 \times 4$ we could do it with some computer time, then for the case $5 \times 5$ we would be able to interpolate the corresponding quasipolynomial with just a little more computer time. 

\bigskip

But this is not exactly the case. For the case of $5 \times 5$ $\mathbb{Z}$-magic squares, we would have a polytope, $K_5(1)$, in a 15-dimensional space defined by 25 pairs of inequalities given by $0 \le \psi_i(x) \le 1$ for $i = 1, \ldots, 25$. It is easy and fast to find some vertices of this polytope with denominators 3, 5, 7 and 8, for example
\begin{displaymath}
\begin{array}{ll}
(0,0,0,1,1,0,1,1,0,1,0,\frac{1}{3},0,1,\frac{1}{3}) & (0,0,0,1,1,1,0,1,0,0,\frac{4}{5},\frac{4}{5},0,1,\frac{3}{5})\\ (0,0,0,1,1,0,\frac{4}{7},1,\frac{3}{7},\frac{5}{7},1,\frac{2}{7},0,1,\frac{4}{7}) & (0,0,1,0,1,1,\frac{5}{8},0,\frac{3}{8},1,0,\frac{1}{4},\frac{3}{4},0,\frac{7}{8}).
\end{array}
\end{displaymath}
This means that the least common multiple of the denominators of the vertices of $K_5(1)$ is at least $3 \cdot 5 \cdot 7 \cdot 8 = 840$. To interpolate a quasipolynomial of degree 15 and period dividing 840 we would need $(15 + 1) 840 = 13440$ values. In order to calculate, for example, the largest one of these values, in principle, one would have to consider the points with integer coordinates in $[0,13339]^{15}$ and check if the 20 inequalities given by the nontrivial linear forms are satisfied. Even considering that we could reduce the number of values to roughly half (6719 values) with Ehrhart-Macdonald Reciprocity Law (see Appendix \ref{apenb}), and that with efficient programing the number of calculations can be further reduced, one would have to make a very very high number of checks. And of course this grows extremely quickly with $n$.

\bigskip

It is true that we could try finding the quasipolynomial with other ideas and also that for our problem we do not need the quasipolynomial $E_n(N)$ but only its leading coefficient, which gives the volume of $K_n(1)$. And it is also true that there are many other ways to calculate the volume of a polytope. But, to the best of our knowledge, no known method would give a formula for the exact volume of $K_n(1)$ for general $n$, and giving the exact value of the volume would be hard even for relatively small values of $n$. As an example illustrating this, for the Birkhoff polytope, one of the most important and studied polytopes, which is a similar but definitely simpler\footnote{The Birkhoff polytope $\mathcal{B}_n$ is the convex polytope in $\mathbb{R}^{n^2}$ whose points are the doubly stochastic matrices, i.e. the $n \times n$ matrices whose entries are non-negative real numbers and whose rows and columns each add up to 1. $\mathcal{B}_n$ is a convex polytope with integer vertices (which are well known).} polytope than the one we are considering, the volume is only known for $n \le 10$ and the case $n = 10$ took almost 17 years of computer time (see \cite{BP} and \url{http://www.math.binghamton.edu/dennis/Birkhoff/}).

\bigskip

So we cannot give a formula for the exact volume of the polytope $K_n(1)$, but we can prove that the volume will be a nonzero rational number. The fact that it will be a rational number is easy to see with our approach because the coefficients of the polynomials we are interpolating are given by the solutions of linear systems with integer coefficients.

\bigskip

To show that the volume is never zero, we first make the next observation:

\begin{lemma} \label{argumentosuma1}
Consider the linear forms given by the elephant basis for some $n \ge 5$. Then, for every $i = 1, \ldots, n^2$ the sum of the coefficients of the linear form $\psi_i$ is 1.
\end{lemma}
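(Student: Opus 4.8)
The plan is to identify the sum of the coefficients of $\psi_i$ with a single entry of one explicit magic square. Recall that the linear forms $\psi_1,\dots,\psi_{n^2}$ are read off the matrix of Figure \ref{matelebasis}, whose columns are the $n^2-2n$ squares $B_1,\dots,B_{n^2-2n}$ of the elephant basis; concretely, the coefficient of the $j$-th variable in $\psi_i$ is the $i$-th entry of $B_j$. Hence the sum over $j$ of these coefficients is exactly the $i$-th entry of the square $M:=\sum_{j=1}^{n^2-2n}B_j$. Equivalently, writing $\mathbf 1=(1,\dots,1)\in\mathbb{Z}^{n^2-2n}$, we have $\Psi(\mathbf 1)=M$, and the assertion of the lemma is precisely that $M$ is the $n\times n$ square all of whose entries equal $1$.

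First I would note that $M$ is a $\mathbb{Z}$-magic square, being the sum of $\mathbb{Z}$-magic squares. Next I would compute the values of $M$ on the skeleton of the elephant basis: by construction each $B_j$ has a single $1$ among the skeleton positions, located in a position that is different for different $j$, and together these $n^2-2n$ positions exhaust the skeleton; therefore $M$ takes the value $1$ in every position of the skeleton.

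Now I would invoke the uniqueness statement established in the proof of Lemma \ref{lemaele}: the values of a $\mathbb{Z}$-magic square on the positions of the skeleton determine the whole square. The constant square $J$ with every entry equal to $1$ is visibly a $\mathbb{Z}$-magic square (every row, every column and each of the two main diagonals sums to $n$), and it too has the value $1$ in every skeleton position. By the uniqueness just recalled, $M=J$. Consequently every entry of $M$ equals $1$, which says exactly that for each $i=1,\dots,n^2$ the coefficients of $\psi_i$ sum to $1$.

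I do not expect any real obstacle here: the only point requiring a little care is the bookkeeping in the second step, namely that $j\mapsto(\text{position of the }1\text{ in }B_j)$ is a bijection onto the skeleton, which is immediate from the definition of the elephant basis. Everything else is the $\mathbb{Z}$-module structure of magic squares together with the uniqueness already proved in Lemma \ref{lemaele}.
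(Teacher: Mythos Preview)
Your proposal is correct and follows essentially the same approach as the paper: sum the elephant basis vectors, observe that the result is a magic square with $1$'s on every skeleton position, and invoke the uniqueness from Lemma~\ref{lemaele} to conclude it is the all-ones square. Your write-up is in fact a slightly more explicit and carefully worded version of the paper's own argument.
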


\begin{proof}
Since the sum of magic squares gives a magic square, when we add up the $n^2 - 2n$ vectors of our elephant basis we obtain a magic square. The obtained magic square has 1 as an entry in all the positions of the skeleton of the elephant basis. Then, this magic square is the only magic square with ones in all that positions, so it has to be the magic square with $n^2$ ones as entries. Since the coefficients of $\psi_i$ are the entries on the $i$-th position of the vectors of the elephant basis, we are done.
\end{proof}

\bigskip

Then, the point $(\frac{1}{2}, \ldots, \frac{1}{2}) \in \mathbb{R}^{n^2 - 2n}$ belongs to $K_n(N)$ because $\psi_i(\frac{1}{2}, \ldots, \frac{1}{2}) = 1/2$ for every $i = 1, \ldots, n^2$. Since all the coefficients of all our linear forms have absolute value less than or equal to $n$ (see, for example, Figure \ref{matelebasis}) then all the points $(\frac{1}{2}, \ldots, \frac{1}{2}) + \frac{1}{2n}e_j$ for $j = 1, \ldots n^2 - 2n$, where $e_j$ is the $j$-th vector of the canonical basis of the $(n^2 - 2n)$-dimensional Euclidean space, belong to $K_n(1)$. Because the vectors joining $(\frac{1}{2}, \ldots, \frac{1}{2})$ to these $n^2 - 2n$ points are linearly independent, we have that the convex hull of the $n^2 - 2n + 1$ points has nonzero volume, and because $K_n(N)$ is convex we have that it contains that convex hull. Then, $K_n(1)$ has nonzero volume.

\bigskip

Since the volume of $K_n(1)$ is less than or equal to one (in fact it is much smaller than one) because the inequalities given by the trivial linear forms define the unit cube, we have proved:

\begin{lemma}
For every $n \ge 5$, $\beta_{\infty, n} (N) = c(n) N^{n^2-2n}$ where $c(n) \in (0,1]$ is the rational number that gives the volume of $K_n(1)$. 
\end{lemma}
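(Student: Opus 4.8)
The plan is to assemble the pieces already developed in this section into one short argument. The shape $\beta_{\infty,n}(N)=c(n)N^{n^2-2n}$ is immediate from linearity: because $\Psi$ is a system of \emph{linear} (not merely affine-linear) forms, $K_n(N)$ is the $N$-dilation of $K_n(1)$, and since every point of $K_n(N)$ already lies in $\Psi^{-1}(\mathbb{R}_+^{n^2})$ we get $\beta_{\infty,n}(N)=\mathrm{vol}_{n^2-2n}(K_n(N))=N^{n^2-2n}\,\mathrm{vol}_{n^2-2n}(K_n(1))$, and I would simply set $c(n):=\mathrm{vol}_{n^2-2n}(K_n(1))$. So the remaining work is to verify that $c(n)$ is rational and satisfies $0<c(n)\le 1$.

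For rationality I would argue exactly as in the cases $n=3,4$: the vertices of $K_n(1)$ arise as intersections of $n^2-2n$ of the hyperplanes $\psi_i(x)=0$ or $\psi_i(x)=1$, hence as solutions of linear systems with integer coefficients, so they have rational coordinates. Ehrhart's Theorem then makes the lattice-point count of the dilations a quasi-polynomial in $N$ of degree $n^2-2n$ with rational coefficients, and its leading coefficient equals $\mathrm{vol}_{n^2-2n}(K_n(1))=c(n)$. The point is that one never needs to compute this quasi-polynomial — it suffices to know that its coefficients come from solving integer linear systems, so $c(n)\in\mathbb{Q}$.

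For the bound $c(n)\le 1$ I would note that the $n^2-2n$ trivial linear forms are precisely the coordinate functionals on $\mathbb{R}^{n^2-2n}$, so the constraints they contribute already confine $K_n(1)$ to the unit cube $[0,1]^{n^2-2n}$, which has volume $1$. For $c(n)>0$ the key input is Lemma~\ref{argumentosuma1}: since every $\psi_i$ has coefficient sum $1$, the centre point $p=(\tfrac12,\dots,\tfrac12)$ satisfies $\psi_i(p)=\tfrac12$ for all $i$ and hence lies strictly inside every slab $\{0\le\psi_i\le 1\}$; and since every coefficient of every $\psi_i$ has absolute value at most $n$ (read off from Figure~\ref{matelebasis}), the $n^2-2n$ perturbed points $p+\tfrac{1}{2n}e_j$ still satisfy all $n^2$ double inequalities, i.e.\ lie in $K_n(1)$. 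These points together with $p$ are affinely independent — their displacements $\tfrac{1}{2n}e_j$ are scaled standard basis vectors — so by convexity $K_n(1)$ contains a full-dimensional $(n^2-2n)$-simplex, which has strictly positive $(n^2-2n)$-volume; hence $c(n)>0$.

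I do not expect a genuine obstacle here: every step recycles material already proved in the preceding discussion. The only place to be slightly careful is the last one, where full-dimensionality of the exhibited simplex (equivalently, affine independence of the $n^2-2n+1$ points, which is exactly linear independence of the displacement vectors $\tfrac{1}{2n}e_j$) is what guarantees positive volume. Once that is observed, the conclusion $c(n)\in(0,1]$ with $c(n)$ rational follows and the lemma is proved.
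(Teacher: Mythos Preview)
Your proposal is correct and follows essentially the same approach as the paper: the scaling $\beta_{\infty,n}(N)=N^{n^2-2n}\mathrm{vol}(K_n(1))$ comes from linearity of $\Psi$; rationality of $c(n)$ is deduced via Ehrhart's theorem and the rationality of the vertices; $c(n)\le 1$ follows because the trivial forms confine $K_n(1)$ to the unit cube; and $c(n)>0$ is obtained exactly as in the paper by using Lemma~\ref{argumentosuma1} to place the centre point $(\tfrac12,\dots,\tfrac12)$ and the perturbations $(\tfrac12,\dots,\tfrac12)+\tfrac{1}{2n}e_j$ inside $K_n(1)$, giving a full-dimensional simplex.
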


\section{The local factors}\label{sec5local}

The only ingredients we are missing in order to give our asymptotics are the local factors $\beta_{p,n}$. Recall that for each prime $p$,
$$\beta_{p,n} = \mathbb{E}_{m \in \mathbb{Z}_p^{n^2-2n}} \prod_{i=1}^{n^2} \Lambda_{\mathbb{Z}_p} (\psi_i(m)),$$
where $\Lambda_{\mathbb{Z}_p}$ is the \textit{local von Mangoldt function}, $\Lambda_{\mathbb{Z}_p}:\mathbb{Z}_p \rightarrow \mathbb{R}^+$, defined by $\Lambda_{\mathbb{Z}_p}(0) = 0$ and $\Lambda_{\mathbb{Z}_p}(b) = \frac{p}{\phi(p)} = \frac{p}{p-1}$ if $b \neq 0$.

\bigskip

Then, the important point is determining for which elements $m \in \mathbb{Z}_p^{n^2-2n}$, we have $\psi_i(m) \not \equiv 0 \pmod{p}$ for every $i = 1, 2, \ldots, n^2$. In order to do that we will use the inclusion-exclusion principle. As we will observe, for small primes we can have a different behaviour but one can always determine a nice formula giving the value of $\beta_{p}$ for every $p$ from some point on.

\subsection{Local factors for $n=3$}

We can think we have the system of 9 linear forms $\Psi: \mathbb{Z}_p^3 \to \mathbb{Z}_p^9$, defined by $\Psi(a,b,c)=(a+b,a-b-c,a+c,a-b+c,a,a+b-c,a-c,a+b+c,a-b)$, and we want to determine how many of the $p^3$ elements of $\mathbb{Z}_p^3$ give nonzero values modulo $p$ for all nine linear forms.

\bigskip

We use the inclusion-exclusion principle. There are $p^3$ elements in $\mathbb{Z}_p^3$. If none of the linear forms is trivial in $\mathbb{Z}_p^3$, any one of them is cancelled for $p^2$ values of $\mathbb{Z}_p^3$. We have to be careful because even if none of our linear forms is trivial in the integers, it could be the case that some of them were trivial when reducing modulo $p$ for some small values of $p$. Since all the coefficients of our linear forms are zeros, and plus or minus ones, this is not the case.

\bigskip

Now, if none of the linear forms is a multiple of another, any of the $\binom{9}{2}$ systems formed by two of the linear forms is cancelled for $p$ values of $\mathbb{Z}_p^3$. This is satisfied for every $p \ge 3$ but for $p = 2$ some of the linear forms are identical to others (for example, the second one is the same as the fourth, the sixth and the eighth). We will later study the case $p = 2$ separately.

\bigskip

The next step is computing, for every system of three of the linear forms, the rank of the matrix formed with their coefficients as rows. Again, small primes can be tricky. For example, if we select the last three linear forms, the corresponding matrix with integer coefficients and its Hermite normal form\footnote{There are different conventions to define the Hermite normal form of a matrix with coefficients in $\mathbb{Z}$. For example, we can allow the next three \textit{unimodular elementary row operations}: interchanging two rows, multiplying one row by $-1$ and adding an integer multiple of a row to another one. Then, the Hermite normal form of a matrix with coefficients in $\mathbb{Z}$ can be defined as the unique matrix obtained from it by \textit{unimodular elementary row operations} such that the first nonzero entry of each row is positive and strictly to the right of the first nonzero entry of the row on top of it and such that all the entries in the same column than the first nonzero entry of each row are less than it and greater than or equal to 0. } are respectively:
\[ \left( \begin{array}{ccc}
1 & 0 & -1 \\
1 & 1 & 1 \\
1 & -1 & 0 \end{array} \right)
\text{ and }
\left( \begin{array}{ccc}
1 & 0 & 2 \\
0 & 1 & 2 \\
0 & 0 & 3 \end{array} \right).\] 

Then, the matrix in our example has rank 2 in $\mathbb{Z}_3$ and has rank 3 in $\mathbb{Z}_p$ for every $p > 3$. We will also later study the case $p = 3$ separately. It makes sense to compute the Hermite normal form of all the matrices with the coefficients of 3, 4, 5, 6, 7, 8 or the 9 forms as rows. If we then look at the largest of all the first nonzero elements of a row we will see that it is equal to 4. Then, we know that for every prime $p \ge 5$ the Hermite normal form of these matrices in $\mathbb{Z}$ has exactly the same rank when we reduce its coefficients modulo $p$. This means that we can study at the same time $\beta_{p,3}$ for every prime $p \ge 5$.

\bigskip

We have that 8 of the matrices with three of the linear forms as rows have rank 2 and the other 76 have rank 3 in $\mathbb{Z}$. Also, any matrix with 4, 5, 6, 7, 8 or the 9 forms as rows has rank 3 in $\mathbb{Z}$. This means that we can use the inclusion-exclusion principle to deduce that for every prime $p \ge 5$ the number of elements $m \in \mathbb{Z}_p^3$, with $\psi_i(m) \not \equiv 0 \pmod{p}$ for every $i = 1, 2, \ldots, 9$ is:
$$p^3 - 9p^2 + 36p - (8p + 76) + 126 - 126 + 84 - 36 + 9 -1 = p^3 - 9p^2 + 28p - 20.$$

Then, for every prime $p \ge 5$
$$\beta_{p,3} = \frac{p^3 - 9p^2 + 28p - 20}{p^3} \left( \frac{p}{p-1} \right)^9.$$

\bigskip

For $p = 2$, the four elements of $\mathbb{Z}_2^3$ starting with 0 cancel the fifth linear form. The element $(1,0,0)$ does not cancel any of the nine linear forms. The other three elements starting with one either cancel the first or the third linear form. Then, $\beta_{2,3}= \frac{1}{2^3} \cdot 2^9 = 2^6$. Similarly, for $p=3$ it is easy to see that the only two elements of $\mathbb{Z}_3^3$ which do not cancel any of the nine linear forms are $(1,0,0)$ and $(2,0,0)$. Then $\beta_{3,3}= \frac{2}{3^3} \cdot \left( \frac{3}{2} \right)^9 = \frac{3^6}{2^8}$.

\subsection{Local factors for $n=4$}

In this case we have the 16 linear forms which coefficients are given by the rows of the matrix of Figure \ref{matele4}. We will determine how many of the $p^8$ elements of $\mathbb{Z}_p^8$ give nonzero values modulo $p$ for all the sixteen linear forms.

\bigskip

First of all we compute the Hermite normal form of all the matrices formed with some of the rows of the matrix of Figure \ref{matele4}. The largest of all the first nonzero elements of a row appearing in all these matrices is $3$, so, as before, for every prime $p \ge 5$ the Hermite normal form of these matrices in $\mathbb{Z}$ has exactly the same rank when we reduce its coefficients modulo $p$. 

\bigskip

Now we compute the ranks in $\mathbb{Z}$ of all these matrices and we have: all 16 one row matrices have rank 1, all 120 two row matrices have rank 2 and all 560 three row matrices have rank 3; 10 four row matrices have rank 3 and 1810 have rank 4; 120 five row matrices have rank 4 and 4248 have rank 5; 708 six row matrices have rank 5 and 7300 have rank 6; 32 seven row matrices have rank 5, 2656 have rank 6 and 8752 have rank 7; 433 eight row matrices have rank 6, 6553 have rank 7 and 5884 have rank 8; 32 nine row matrices have rank 6, 2656 have rank 7 and 8752 have rank 8; 708 ten row matrices have rank 7 and 7300 have rank 8; 120 eleven row matrices have rank 7 and 4248 have rank 8; 10 twelve row matrices have rank 7 and 1810 have rank 8; all 560 thirteen row matrices have rank 8 and the same thing happens with the 120 fourteen row matrices, the 16 fifteen row matrices and the only sixteen row matrix. By the inclusion-exclusion principle, for every $p \ge 5$ the number of elements $m \in \mathbb{Z}_p^8$, with $\psi_i(m) \not \equiv 0 \pmod{p}$ for every $i = 1, 2, \ldots, 16$ is:
$$p^8 - 16p^7 + 120p^6 - 550p^5 + 1690p^4 - 3572p^3 + 5045p^2 - 4257p + 1539.$$

Then, for every prime $p \ge 5$
\begin{small}
$$\beta_{p,4} = \frac{p^8 - 16p^7 + 120p^6 - 550p^5 + 1690p^4 - 3572p^3 + 5045p^2 - 4257p +1539}{p^8} \left( \frac{p}{p-1} \right)^{16}.$$
\end{small}

There is $1$ element of $\mathbb{Z}_2^8$ and there are $34$ elements of $\mathbb{Z}_3^8$ not cancelling any of the sixteen linear forms, so
$$\beta_{2,4} = \frac{1}{2^8} \cdot 2^{16} = 2^8 \qquad \text{and} \qquad \beta_{3,4} = \frac{34}{3^8} \cdot \left( \frac{3}{2} \right)^{16} = \frac{17 \cdot 3^8}{2^{15}}.$$

\subsection{Local factors for higher values of $n$}
In the general case, it is clear that we can follow the same steps. Since every two of our linear forms are linearly independent, for primes $p \ge p_0(n)$ we will have:
\begin{displaymath}
\begin{array}{lll}
\beta_{p,n} & = & \dfrac{p^{n^2-2n} - n^2p^{n^2-2n-1} + P_{\le n^2-2n-2}(p)}{p^{n^2-2n}} \left( \dfrac{p}{p-1} \right)^{n^2} \\
& = & \dfrac{p^{2n^2-2n} - n^2p^{2n^2-2n-1} + P_{\le 2n^2-2n-2}(p)}{p^{2n^2-2n} - n^2p^{2n^2-2n-1} + P_{=2n^2-2n-2}(p)} \\
& = & 1 + \dfrac{P_{\le 2n^2-2n-2}(p)}{P_{=2n^2-2n}(p)},
\end{array}
\end{displaymath}
where $P_{\le D}(p)$ and $P_{=D}(p)$ are (possibly different in each occasion) polynomials in $p$ of degree less than or equal to $D$ and $D$ respectively.

\bigskip

The existence of the logarithm function makes the theory of infinite products essentially equivalent to the theory of infinite series. In particular, it is well known that (see, for example, \cite{KK}):
\begin{lemma}
If $\{a_i\}_{i=1}^{\infty}$ is a sequence of real numbers such that $0 \le |a_i| < 1$ and $\sum_{i=1}^{\infty}|a_i| < \infty$ then $\prod_{i=1}^{\infty} (1 + a_i)$ converges to a nonzero real number.
\end{lemma}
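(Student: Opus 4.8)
The plan is to reduce the convergence of the infinite product to the convergence of an infinite series by taking the real logarithm, exactly in the spirit of the remark preceding the statement. Since $0 \le |a_i| < 1$, each factor $1 + a_i$ lies in the open interval $(0,2)$, so $\log(1 + a_i)$ is a well-defined real number, and the $N$-th partial product satisfies
$$\prod_{i=1}^{N}(1+a_i) = \exp\Big(\sum_{i=1}^{N} \log(1+a_i)\Big).$$
By continuity of $\exp$, it therefore suffices to prove that the series $\sum_{i=1}^{\infty} \log(1+a_i)$ converges to a finite real number $L$; the product will then converge to $e^{L}$, which is nonzero because $\exp$ never vanishes.

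To prove convergence of that series, first note that $\sum_i |a_i| < \infty$ forces $a_i \to 0$, so there is an index $M$ with $|a_i| \le 1/2$ for all $i \ge M$. On the interval $|x| \le 1/2$ one has the elementary estimate $|\log(1+x)| \le 2|x|$ (for instance from the Taylor series $\log(1+x) = \sum_{k\ge 1}(-1)^{k+1}x^k/k$, whose tail past the linear term is dominated by a geometric series, or simply by a one-variable calculus bound). Hence
$$\sum_{i \ge M} |\log(1+a_i)| \le 2 \sum_{i \ge M} |a_i| < \infty,$$
so $\sum_{i=1}^{\infty} \log(1+a_i)$ converges absolutely; in particular it converges to some $L \in \mathbb{R}$, and by the first paragraph $\prod_{i=1}^{\infty}(1+a_i) = e^{L} \neq 0$.

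There is no serious obstacle here; the only points requiring a little care are the two just highlighted: checking that the factors stay positive so that the real logarithm applies, and pinning down a clean constant in the bound $|\log(1+x)| \le C|x|$ valid on a fixed neighbourhood of $0$ (any $C>1$ works once $|x|$ is small enough, and $C=2$ is convenient on $|x|\le 1/2$). Everything else is comparison of series together with continuity of the exponential. If one prefers to avoid logarithms entirely, an alternative is to bound the partial products above by $\exp\big(\sum_i |a_i|\big)$ and, past the tail index $M$ where $\sum_{i\ge M}|a_i| < 1$, below by $\prod_{i\ge M}(1-|a_i|) \ge 1 - \sum_{i\ge M}|a_i| > 0$, and then run a Cauchy argument on the partial products; but the logarithm route is the shorter one and is the one we will follow.
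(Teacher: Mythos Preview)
Your argument is correct and is precisely the standard proof the paper is alluding to when it writes ``the existence of the logarithm function makes the theory of infinite products essentially equivalent to the theory of infinite series''; the paper itself does not spell out a proof but simply cites Knopp's textbook for this well-known fact. Your handling of the two delicate points (positivity of the factors so that the real logarithm applies, and the bound $|\log(1+x)|\le 2|x|$ on $|x|\le 1/2$) is fine.
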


We know that for every prime, $p$, we have $\beta_{p,n} > 0$ because $m = (1, \ldots, 1) \in \mathbb{Z}^{n^2 - 2n}$ makes $\psi_i(m) = 1$ for every $i = 1, 2, \ldots, n^2$. Also, since there is a constant, $C(n) > 0$, such that for $p \ge p_0(n)$ we have
$$\left| \dfrac{P_{\le 2n^2-2n-2}(p)}{P_{=2n^2-2n}(p)} \right| \le \frac{C(n)}{n^2},$$
then, by the previous lemma, we deduce:
\begin{lemma}
For every $n \ge 3$, $\prod_{p \text{ prime}} \beta_{p,n}$ converges to a strictly positive real number.
\end{lemma}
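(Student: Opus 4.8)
The plan is to split the product $\prod_{p\ \mathrm{prime}} \beta_{p,n}$ into the finitely many factors with $p < p_0(n)$ and the infinite tail $\prod_{p \ge p_0(n)} \beta_{p,n}$, to show that the former is a strictly positive real number and that the latter converges to a strictly positive real number, and then to multiply. The tail will be handled by the lemma on infinite products quoted just above, so the whole argument reduces to verifying its two hypotheses for the sequence $a_p := \beta_{p,n} - 1$ (indexed by the primes $p \ge p_0(n)$).

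For the finite part I would simply observe that $\beta_{p,n} > 0$ for every prime $p$: the point $m = (1,\dots,1) \in \mathbb{Z}_p^{n^2-2n}$ gives $\psi_i(m) = 1 \neq 0$ for all $i = 1,\dots,n^2$ (this is Lemma \ref{argumentosuma1} for $n \ge 5$, and is immediate from Figures \ref{figure3x3} and \ref{matele4} for $n = 3,4$), so this $m$ contributes $(p/(p-1))^{n^2} > 0$ to the expectation defining $\beta_{p,n}$ while every other term of that expectation is $\ge 0$. Hence $\prod_{p < p_0(n)} \beta_{p,n}$ is a finite product of strictly positive reals.

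For the tail I would use the expression obtained above, namely $\beta_{p,n} = 1 + P_{\le 2n^2-2n-2}(p)/P_{=2n^2-2n}(p)$ for $p \ge p_0(n)$. The decisive point — already implicit in the computation of the $\beta_{p,n}$ — is that the $1/p$ contribution cancels. Write $N_p$ for the number of $m \in \mathbb{Z}_p^{n^2-2n}$ with $\psi_i(m) \neq 0$ for all $i = 1,\dots,n^2$. For $p \ge p_0(n)$ none of the $\psi_i$ is the zero form over $\mathbb{Z}_p$ and no two of them are proportional over $\mathbb{Z}_p$ (this uses that the $\psi_i$ are pairwise linearly independent over $\mathbb{Q}$), so inclusion–exclusion gives $N_p = p^{n^2-2n} - n^2 p^{n^2-2n-1} + (\text{terms of degree} \le n^2-2n-2 \text{ in } p)$, whence $N_p/p^{n^2-2n} = 1 - n^2/p + O_n(1/p^2)$; multiplying by $(p/(p-1))^{n^2} = 1 + n^2/p + O_n(1/p^2)$ yields $\beta_{p,n} = 1 + O_n(1/p^2)$, i.e. there is $C(n) > 0$ with $|a_p| \le C(n)/p^2$ for all $p \ge p_0(n)$.

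The conclusion is then routine. After enlarging $p_0(n)$ if necessary we also have $0 \le |a_p| < 1$ for $p \ge p_0(n)$, while $\sum_{p \ge p_0(n)} |a_p| \le C(n) \sum_{k \ge 1} k^{-2} < \infty$; so the quoted lemma applies and $\prod_{p \ge p_0(n)} (1 + a_p) = \prod_{p \ge p_0(n)} \beta_{p,n}$ converges to a nonzero real number, which is in fact strictly positive since each factor $1 + a_p$ is positive (as $|a_p| < 1$). Multiplying the strictly positive finite product by the strictly positive convergent tail yields the statement. The only ingredient specific to magic squares is the pairwise linear independence of the $\psi_i$, which is precisely what produces the $-n^2 p^{n^2-2n-1}$ term in $N_p$ and hence the cancellation of the $O(1/p)$ part of $\beta_{p,n}$; without that cancellation $\sum_p |a_p|$ would diverge. (For $n = 3$ and $n = 4$ the bound $|a_p| = O(1/p^2)$ can alternatively be read straight off the explicit rational formulas for $\beta_{p,3}$ and $\beta_{p,4}$ displayed above.)
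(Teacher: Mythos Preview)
Your argument is correct and follows essentially the same route as the paper: positivity of each $\beta_{p,n}$ via the all-ones vector, the asymptotic $\beta_{p,n} = 1 + O_n(1/p^2)$ coming from the cancellation of the $1/p$ term (which in turn rests on the pairwise linear independence of the $\psi_i$), and then the quoted lemma on infinite products. If anything you are more careful than the paper, which writes the bound as $C(n)/n^2$ (an evident typo for $C(n)/p^2$) and does not explicitly address $|a_p|<1$ or the strict positivity of the tail.
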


\section{Asymptotics for Magic Squares of Primes}\label{sec6asym}
The work of the previous sections and the results mentioned in the Introduction prove the next three theorems:

\begin{theorem}
The number of $3 \times 3$ $\mathbb{Z}$-magic squares with their entries being prime numbers in $[0,N]$ is
$$(1 + o(1))\mathfrak{S}_3 \frac{N^3}{\log^9N},$$
where
$$\mathfrak{S}_3 = \frac{243}{8} \prod_{\substack{p \text{ prime} \\ p \ge 5}} \frac{p^3 - 9p^2 + 28p - 20}{p^3} \left( \frac{p}{p-1} \right)^9 \approx 25.818.$$
\end{theorem}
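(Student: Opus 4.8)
The strategy is to apply Conjecture \ref{conjeHL} --- which, as recalled after the statement of the Green--Tao Main Theorem, is an unconditional theorem for every system of affine-linear forms of finite complexity --- to the linear system $\Psi:\mathbb{Z}^3\to\mathbb{Z}^9$, $\Psi(a,b,c)=(a+b,a-b-c,a+c,a-b+c,a,a+b-c,a-c,a+b+c,a-b)$ of Figure \ref{figure3x3}, and to the convex body $K=K_3(N)=\{x\in\mathbb{R}^3:0\le\psi_i(x)\le N,\ i=1,\dots,9\}$. First I would check the hypotheses. The nine forms are linear (hence non-constant) and pairwise non-proportional, since each has first coordinate equal to $1$ and the nine coefficient vectors are distinct, so no two can be rational multiples of one another; because all constant terms vanish, the size $\|\Psi\|_N=\sum_{i=1}^{9}\sum_{j=1}^{3}|\dot\psi_i(e_j)|$ is a fixed constant, so we may take a fixed $L$. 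Moreover $K_3(N)\subseteq[-N,N]^3$: the inequality $0\le\psi_5(x)=a\le N$ forces $a\in[0,N]$, and then $b=\psi_1(x)-\psi_5(x)$ and $c=\psi_3(x)-\psi_5(x)$ lie in $[-N,N]$. Finally, by the Lemma of Section \ref{seccomp} establishing that the complexity of this $3\times 3$ system equals $3<\infty$, Conjecture \ref{conjeHL} applies (with $s=3$).

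Next I would identify $|K_3(N)\cap\mathbb{Z}^3\cap\Psi^{-1}(P^9)|$ with the quantity we want to count. The map $(a,b,c)\mapsto\Psi(a,b,c)$ is injective --- one recovers $(a,b,c)$ as $(\psi_5,\psi_1-\psi_5,\psi_3-\psi_5)$ --- and, as established in Section \ref{sec2basis}, it is a bijection from $\mathbb{Z}^3$ onto the set of all $3\times3$ $\mathbb{Z}$-magic squares. Membership $\Psi(a,b,c)\in P^9$ means every entry is prime, the constraint $\psi_i\le N$ built into $K_3(N)$ supplies the upper bound $N$, and primality forces each entry to be $\ge 2$, so lattice points of $K_3(N)$ lying in $\Psi^{-1}(P^9)$ are in bijection with the $3\times3$ $\mathbb{Z}$-magic squares all of whose entries are prime numbers in $[0,N]$. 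Since all points of $K_3(N)$ already satisfy $\psi_i\ge 0$, one has $K_3(N)\cap\Psi^{-1}(\mathbb{R}_+^9)=K_3(N)$, whence $\beta_{\infty,3}(N)=\text{vol}_3(K_3(N))=N^3/6$ by Section \ref{sec4vol}. Conjecture \ref{conjeHL} then yields
\begin{displaymath}
|K_3(N)\cap\mathbb{Z}^3\cap\Psi^{-1}(P^9)|=(1+o(1))\,\frac{\beta_{\infty,3}(N)}{\log^9 N}\prod_{p\text{ prime}}\beta_{p,3}+o\!\left(\frac{N^3}{\log^9 N}\right).
\end{displaymath}

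It then remains to assemble the constant. Using $\beta_{2,3}=2^6$, $\beta_{3,3}=3^6/2^8$ and $\beta_{p,3}=\frac{p^3-9p^2+28p-20}{p^3}\left(\frac{p}{p-1}\right)^9$ for $p\ge5$ (all from Section \ref{sec5local}), we get $\prod_{p}\beta_{p,3}=2^6\cdot\frac{3^6}{2^8}\prod_{p\ge5}\beta_{p,3}=\frac{729}{4}\prod_{p\ge5}\beta_{p,3}$, so the main term equals $\frac{N^3}{6\log^9 N}\cdot\frac{729}{4}\prod_{p\ge5}\beta_{p,3}=\frac{243}{8}\,\frac{N^3}{\log^9 N}\prod_{p\ge5}\beta_{p,3}$, identifying $\mathfrak{S}_3=\frac{243}{8}\prod_{p\ge5}\beta_{p,3}$. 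Because $\prod_p\beta_{p,3}$ converges to a strictly positive real (the final Lemma of Section \ref{sec5local}), $\mathfrak{S}_3>0$, so the additive error $o(N^3/\log^9 N)$ is absorbed into $(1+o(1))\mathfrak{S}_3 N^3/\log^9 N$, giving the asserted asymptotic. The numerical value $\mathfrak{S}_3\approx25.818$ follows from a direct evaluation of the Euler product, which converges quickly since $\beta_{p,3}=1-8/p^2+O(1/p^3)$.

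I do not expect a real obstacle here: all the substantial work --- finite complexity (so that Conjecture \ref{conjeHL} is available), the volume $N^3/6$, and every local factor --- was carried out in the earlier sections, and what is left is verification of hypotheses, a bijection bookkeeping, and arithmetic. The only points meriting care are confirming that $K_3(N)$ genuinely lies in the cube $[-N,N]^3$ demanded by Conjecture \ref{conjeHL} and that the $o(\cdot)$ terms, depending only on the fixed data $t=9$, $d=3$ and a bounded $L$, are uniform in the sense needed to conclude $|K_3(N)\cap\mathbb{Z}^3\cap\Psi^{-1}(P^9)|=(1+o(1))\mathfrak{S}_3 N^3/\log^9 N$.
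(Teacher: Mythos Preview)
Your proposal is correct and follows essentially the same approach as the paper: the paper's ``proof'' in Section~\ref{sec6asym} simply asserts that the work of the previous sections, combined with Conjecture~\ref{conjeHL} (now a theorem by Green--Tao--Ziegler), yields the result. You have spelled out explicitly the verification of hypotheses, the bijection, and the arithmetic assembling $\mathfrak{S}_3$ from $\beta_{\infty,3}$, $\beta_{2,3}$, $\beta_{3,3}$ and the $\beta_{p,3}$ for $p\ge5$, all of which the paper leaves implicit; your extra checks (e.g.\ $K_3(N)\subseteq[-N,N]^3$, pairwise non-proportionality, absorption of the additive error via $\mathfrak{S}_3>0$) are sound and welcome.
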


Observe that, apart from the asymptotics for 5-term arithmetic progressions of primes, this is one of the first ``natural'' applications of the work of Green, Tao and Ziegler to a system of linear equations of complexity 3 (see Section \ref{seccomp}).

\bigskip

For $n = 4$ we have complexity 1 (see Section \ref{seccomp}) and the asymptotic:

\begin{theorem}
The number of $4 \times 4$ $\mathbb{Z}$-magic squares with their entries being prime numbers in $[0,N]$ is
$$(1 + o(1))\mathfrak{S}_4 \frac{N^8}{\log^{16}N},$$
where
\begin{small}
\begin{displaymath}
\begin{array}{lll}
\mathfrak{S}_4 & = & \frac{34654959}{573440} \prod_{\substack{p \text{ prime} \\ p \ge 5}} \frac{p^8 - 16p^7 + 120p^6 - 550p^5 + 1690p^4 - 3572p^3 + 5045p^2 - 4257p +1539}{p^8} \left( \frac{p}{p-1} \right)^{16} \\
& \approx & 76.758.
\end{array}
\end{displaymath}
\end{small}
\end{theorem}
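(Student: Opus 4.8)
The plan is to read this theorem off as a direct application of Conjecture \ref{conjeHL} --- which, by the results quoted in the Introduction, is an \emph{unconditional} theorem for every system of affine-linear forms of finite complexity --- to the system of $16$ linear forms $\Psi:\mathbb{Z}^{8}\to\mathbb{Z}^{16}$ whose coefficients are the rows of the matrix of Figure \ref{matele4}, paired with the convex body $K=K_{4}(N)=\{x\in\mathbb{R}^{8}:0\le\psi_i(x)\le N\ \text{for}\ i=1,\dots,16\}$. The inputs from the earlier sections are: the system has complexity $1$ (Section \ref{seccomp}); $\text{vol}_{8}(K_{4}(N))=\tfrac{8389}{120960}N^{8}$ (Section \ref{sec4vol}); and the local factors are $\beta_{2,4}=2^{8}$, $\beta_{3,4}=\tfrac{17\cdot 3^{8}}{2^{15}}$, and $\beta_{p,4}$ for $p\ge 5$ as computed in Section \ref{sec5local}, the product $\prod_{p}\beta_{p,4}$ converging to a strictly positive real number.

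First I would verify the hypotheses of Conjecture \ref{conjeHL}. A linear form is an affine-linear form with vanishing constant term, so $\|\Psi\|_{N}=\sum_{i=1}^{16}\sum_{j=1}^{8}|\dot\psi_i(e_j)|$ equals the (fixed, finite) sum of the absolute values of the entries of the matrix of Figure \ref{matele4}, hence is bounded by a constant $L$ independent of $N$; none of the $\psi_i$ is the zero form, so none is constant, and --- as is readily checked from Figure \ref{matele4} --- no two of them are rational multiples of each other. Finally $K_{4}(N)\subseteq[0,N]^{8}\subseteq[-N,N]^{8}$, because $\psi_{1},\dots,\psi_{7}$ are the first seven coordinate functions and $\psi_{9}$ the eighth, so the defining inequalities force $0\le x_j\le N$ for every $j$; and $K_{4}(N)$ is convex as an intersection of slabs. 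Thus Conjecture \ref{conjeHL} applies with $d=8$, $t=16$.

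Next I would identify the quantity being counted. Since the $4\times 4$ elephant basis of Figure \ref{figure4} is a $\mathbb{Z}$-basis for $4\times 4$ $\mathbb{Z}$-magic squares (Section \ref{sec2basis}), the restriction of $\Psi$ to $\mathbb{Z}^{8}$ is a bijection onto the set of all $4\times 4$ $\mathbb{Z}$-magic squares, and the extra constraints $x\in K_{4}(N)$ and $\Psi(x)\in P^{16}$ pick out exactly the squares all of whose $16$ entries are primes in $[0,N]$. Hence the number we want equals $|K_{4}(N)\cap\mathbb{Z}^{8}\cap\Psi^{-1}(P^{16})|$, to which Conjecture \ref{conjeHL} gives
$$|K_{4}(N)\cap\mathbb{Z}^{8}\cap\Psi^{-1}(P^{16})|=(1+o(1))\frac{\beta_{\infty,4}(N)}{\log^{16}N}\prod_{p}\beta_{p,4}+o\!\left(\frac{N^{8}}{\log^{16}N}\right),$$
where $\beta_{\infty,4}(N)=\text{vol}_{8}(K_{4}(N)\cap\Psi^{-1}(\mathbb{R}_{+}^{16}))=\text{vol}_{8}(K_{4}(N))=\tfrac{8389}{120960}N^{8}$, there being no archimedean loss since $K_{4}(N)\subseteq\Psi^{-1}(\mathbb{R}_{+}^{16})$.

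Finally I would assemble the constant. As $\prod_{p}\beta_{p,4}$ is a positive real number, the leading term has exact order $N^{8}/\log^{16}N$ and therefore absorbs the error $o(N^{8}/\log^{16}N)$; collecting the factors,
$$\mathfrak{S}_{4}=\frac{8389}{120960}\cdot 2^{8}\cdot\frac{17\cdot 3^{8}}{2^{15}}\cdot\prod_{\substack{p\ \text{prime}\\ p\ge 5}}\beta_{p,4}=\frac{34654959}{573440}\prod_{\substack{p\ \text{prime}\\ p\ge 5}}\beta_{p,4},$$
and substituting the Section \ref{sec5local} formula for $\beta_{p,4}$ gives the displayed expression for $\mathfrak{S}_{4}$. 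I do not expect a genuine mathematical obstacle: the deep content (that Conjecture \ref{conjeHL} holds at complexity $1$, which is essentially circle-method territory, present already in the work of Hardy--Littlewood and Vinogradov) and the computations of complexity, volume, and local densities are all in place from the previous sections. The only care required is the verification of the hypotheses of Conjecture \ref{conjeHL} carried out above, and the bookkeeping that combines $\beta_{\infty,4}$ with $\prod_{p}\beta_{p,4}$ into the closed form of $\mathfrak{S}_{4}$ (together with the numerical check $\mathfrak{S}_{4}\approx 76.758$).
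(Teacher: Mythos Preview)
Your proposal is correct and takes essentially the same approach as the paper: the paper does not give a standalone proof of this theorem but simply states that ``the work of the previous sections and the results mentioned in the Introduction prove the next three theorems,'' and your write-up is precisely a careful unpacking of that sentence, verifying the hypotheses of Conjecture~\ref{conjeHL}, invoking the complexity-$1$ result from Section~\ref{seccomp}, the volume $\tfrac{8389}{120960}$ from Section~\ref{vol4x4}, and the local factors from Section~\ref{sec5local}, and then doing the arithmetic $\tfrac{8389}{120960}\cdot 2^{8}\cdot\tfrac{17\cdot 3^{8}}{2^{15}}=\tfrac{34654959}{573440}$.
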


Finally, for $n \ge 5$ (see Section \ref{seccomp}) we have complexity 1 and the ``order of magnitude of the asymptotics'':

\begin{theorem}
The number of $n \times n$ $\mathbb{Z}$-magic squares with their entries being prime numbers in $[0,N]$ is
$$(1 + o(1))\mathfrak{S}_n \frac{N^{n^2-2n}}{\log^{n^2}N},$$
where $\mathfrak{S}_n = \text{vol}_{n^2-2n}(K_n(1)) \prod_{p \text{ prime}} \beta_{p,n}$ is a nonzero real number.
\end{theorem}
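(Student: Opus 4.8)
The plan is to deduce the statement directly from the now-proven Generalised Hardy--Littlewood conjecture (Conjecture \ref{conjeHL}), applied to the system of linear forms $\Psi$ coming from the elephant basis, together with the structural and positivity results already established. First I would record that $\Psi:\mathbb{Z}^{n^2-2n}\to\mathbb{Z}^{n^2}$ satisfies the hypotheses needed to apply the conjecture: with $d=n^2-2n$ and $t=n^2$, none of the $\psi_i$ is constant (each is a nonzero linear form read off a row of the matrix of Figure \ref{matelebasis}), no two of them are rational multiples of one another (any two of these forms are linearly independent, as used in Section \ref{sec5local}), and the size $\|\Psi\|_N$ is bounded by some $L=L(n)$ independent of $N$, since every coefficient has absolute value at most $n$ and every constant term is $0$. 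By the Proposition of Section \ref{seccomp} the complexity of $\Psi$ is $1$, hence finite, so by the Main Theorem of Green and Tao combined with \cite{GT2} and \cite{GT3} the conclusion of Conjecture \ref{conjeHL} holds for $\Psi$ and any convex body $K\subset[-N,N]^{d}$.

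Next I would take $K=K_n(N)=\{x\in\mathbb{R}^{n^2-2n}:0\le\psi_i(x)\le N\text{ for }i=1,\dots,n^2\}$, which is convex as an intersection of $n^2$ slabs, and which lies in $[0,N]^{n^2-2n}\subseteq[-N,N]^{n^2-2n}$ because among the $\psi_i$ are the trivial (coordinate) forms indexed by the skeleton of the elephant basis, so the inequalities $0\le\psi_i(x)\le N$ for those $i$ pin every coordinate of $x$ to $[0,N]$. Since the elephant basis is a $\mathbb{Z}$-basis (Lemma \ref{lemaele}), $\Psi$ is a bijection between $\mathbb{Z}^{n^2-2n}$ and the abelian group of $n\times n$ $\mathbb{Z}$-magic squares; under it, $K_n(N)\cap\mathbb{Z}^{n^2-2n}\cap\Psi^{-1}(P^{n^2})$ corresponds exactly to the set of $n\times n$ $\mathbb{Z}$-magic squares all of whose $n^2$ entries are primes in $[0,N]$ (the lower bound $0\le\psi_i(x)$ being automatic once $\psi_i(x)\in P$). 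Hence the quantity we want equals $|K_n(N)\cap\mathbb{Z}^{n^2-2n}\cap\Psi^{-1}(P^{n^2})|$.

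Now I would invoke the theorem to write this count as
$$(1+o(1))\,\frac{\beta_{\infty,n}}{\log^{n^2}N}\prod_{p\text{ prime}}\beta_{p,n}\;+\;o\!\left(\frac{N^{n^2-2n}}{\log^{n^2}N}\right).$$
Because every point of $K_n(N)$ already satisfies $\psi_i(x)\ge 0$ (as noted in Section \ref{sec4vol}), the archimedean factor is $\beta_{\infty,n}=\mathrm{vol}_{n^2-2n}(K_n(N))=\mathrm{vol}_{n^2-2n}(K_n(1))\,N^{n^2-2n}$ by homogeneity of the linear forms, and by the last Lemma of Section \ref{sec4vol} this equals $c(n)N^{n^2-2n}$ with $c(n)\in(0,1]$; by the last Lemma of Section \ref{sec5local} the product $\prod_p\beta_{p,n}$ converges to a strictly positive real. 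Setting $\mathfrak{S}_n=\mathrm{vol}_{n^2-2n}(K_n(1))\prod_p\beta_{p,n}>0$, the main term above is $(1+o(1))\mathfrak{S}_n N^{n^2-2n}/\log^{n^2}N$, which has exact order $N^{n^2-2n}/\log^{n^2}N$ and hence dominates the error term; absorbing the latter into the $(1+o(1))$ factor yields the claimed asymptotic.

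Finally, a word on where the care is needed. There is no deep obstacle: the analytic heavy lifting is the black-box Conjecture \ref{conjeHL}, and the structural input is Lemma \ref{lemaele}, the complexity Proposition, and the two positivity lemmas. The one point to get right is the bookkeeping of the last step --- the error term of Conjecture \ref{conjeHL} is of the \emph{same} order of magnitude as the main term, so it is precisely the strict positivity of $\mathfrak{S}_n$ (equivalently, of both $\mathrm{vol}_{n^2-2n}(K_n(1))$ and $\prod_p\beta_{p,n}$) that allows folding the error into a clean $(1+o(1))$. This is exactly why those two lemmas were proved, and checking the bijective correspondence and the inclusion $K_n(N)\subseteq[-N,N]^{d}$ is the only other item requiring attention.
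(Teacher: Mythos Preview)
Your proposal is correct and follows exactly the approach the paper has in mind: the paper does not write out a separate proof for this theorem but simply states that ``the work of the previous sections and the results mentioned in the Introduction prove the next three theorems,'' and your write-up is precisely the careful assembly of those ingredients (the elephant basis and Lemma \ref{lemaele}, the complexity Proposition, the volume Lemma, the local-factor Lemma, and Conjecture \ref{conjeHL} as a theorem). Your attention to the positivity of $\mathfrak{S}_n$ for absorbing the error term, and to the inclusion $K_n(N)\subseteq[-N,N]^{n^2-2n}$ via the trivial forms, are exactly the bookkeeping points that make the argument go through.
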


\section{Magic squares with different entries}\label{sec7dif}

Up to this point we have not been careful with the fact that there could be repetitions among the entries of our $\mathbb{Z}$-magic squares. In this section we prove that the asymptotics for $\mathbb{Z}$-magic squares of primes with different entries are exactly the same as the asymptotics for $\mathbb{Z}$-magic squares of primes. We first observe:

\begin{lemma} \label{cuadmagdif}
For every $n \ge 3$, there are well known constructions of $n \times n$ $\mathbb{Z}$-magic squares with their $n^2$ entries being the first $n^2$ natural numbers (these are usually called normal magic squares).
\end{lemma}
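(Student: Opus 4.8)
The plan is to prove the statement by exhibiting, for each residue of $n$ modulo $4$, one of the classical explicit constructions and checking that it yields a $\mathbb{Z}$-magic square whose entries run through $\{1, 2, \ldots, n^2\}$. In every case the common magic sum is forced to be $S = \frac{1}{n}\sum_{k=1}^{n^2} k = \frac{n(n^2+1)}{2}$, so it suffices to verify that each row, each column and both main diagonals have the same sum; that sum is then automatically $S$. I would organise the proof as the three standard cases: $n$ odd, $n$ doubly even ($n \equiv 0 \pmod 4$), and $n$ singly even ($n \equiv 2 \pmod 4$).

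First, for $n$ odd I would use the Siamese (de la Loubère) method: place $1$ in the middle cell of the top row, and having placed $k$ put $k+1$ one cell up and one cell to the right, with both coordinates read modulo $n$, except that if that target cell is already occupied one instead places $k+1$ immediately below $k$. Writing $k-1 = qn + r$ with $0 \le r < n$, one tracks how the "high digit" $q$ and the "low digit" $r$ are distributed; a short argument shows that along each row, each column and each main diagonal every value of $q$ and every value of $r$ occurs exactly once, which makes all the line sums equal. For $n$ doubly even I would use the diagonal-complement method: fill the square with $1, \ldots, n^2$ in the natural row-by-row order, then replace each entry $a$ lying on a main diagonal of any of the $4 \times 4$ sub-blocks along the two main diagonals by its complement $n^2 + 1 - a$. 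Since exactly half of the entries in every row and every column get complemented, each row and column sum moves from its original value to $S$; the two full diagonals are entirely complemented, so their sums are checked to equal $S$ as well.

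For $n$ singly even (hence $n \ge 6$) I would invoke the Strachey/LUX construction of Conway: write $n = 2m$ with $m$ odd, build an $m \times m$ Siamese magic square, blow up each of its cells into a $2 \times 2$ block holding the appropriate four consecutive "quadrant" values of $\{1, \ldots, n^2\}$ arranged in an L, U, or X pattern according to Conway's rule, and then verify the row, column and diagonal sums. This last verification is the only place where genuine care is needed, and it reduces to a finite bookkeeping check.

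The only real obstacle is that a fully rigorous justification of the Siamese method and of the LUX rule requires a modest amount of combinatorial verification that is tedious to write out in detail. Since these constructions are entirely classical and the lemma is used here only as a stepping stone, I would keep the verifications brief and point to a standard reference such as \cite{W} for the complete details rather than reproducing them in full.
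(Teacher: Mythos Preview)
Your approach is essentially the same as the paper's, which does not prove this lemma at all but simply names the classical constructions (de la Loub\`ere and Bachet de M\'eziriac for odd $n$, Devedec for even $n$) and refers the reader to Kraitchik \cite{K}. You go further by actually sketching the verifications, which is fine and arguably more informative than what the paper does.

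Two small corrections. First, your closing reference \cite{W} points to Ward's paper on vector spaces of magic squares, which does not contain these constructions; the correct citation from this paper's bibliography is \cite{K}. Second, your description of the doubly-even case is slightly garbled: the phrase ``any of the $4 \times 4$ sub-blocks along the two main diagonals'' is ambiguous. The standard method partitions the whole $n \times n$ square into $(n/4)^2$ disjoint $4 \times 4$ blocks and complements the entries on the two diagonals of \emph{every} such block, not only those blocks lying along the main diagonals of the big square. With that clarification your row/column argument (exactly half the entries in each line are complemented) goes through.
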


Maybe the best known constructions for normal magic squares of odd side are the ones popularly known today by the names of Simon de la Loub\`{e}re and Bachet de M\`{e}ziriac. For normal magic squares of even side there are also different constructions, for example those of Devedec to name some. All these constructions can be found in \cite{K} and many others very easily searching on the Internet.

\bigskip

The only important thing for us is that there exist magic squares with all their entries being different. With this in mind we can prove the next lemma.

\begin{lemma}
For every $n \ge 3$ the number of $n \times n$ $\mathbb{Z}$-magic squares with their entries being primes in $[0,N]$ is asymptotically equal to the number of $n \times n$ $\mathbb{Z}$-magic squares with their entries being different primes in $[0,N]$ when $N \to \infty$.
\end{lemma}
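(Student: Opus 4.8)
The plan is to show that the ``bad'' magic squares — those with at least one coincidence among their entries — are negligible compared to the main term $\mathfrak{S}_n N^{n^2-2n}/\log^{n^2} N$. First I would observe that a coincidence $\psi_i(x) = \psi_j(x)$ between two of our linear forms, for $i \neq j$, is itself a nontrivial linear condition on $x \in \mathbb{Z}^{n^2-2n}$ (nontrivial because no two of the $\psi_i$ are equal as forms on $\mathbb{Z}^{n^2-2n}$: distinct positions of a magic square are genuinely distinct coordinates of the basis, so $\psi_i - \psi_j$ is a nonzero form). Hence the squares with $\psi_i(x) = \psi_j(x)$ and all entries prime in $[0,N]$ are counted by a system of linear forms on the hyperplane $\{\psi_i = \psi_j\} \cong \mathbb{Q}^{n^2-2n-1}$, still of finite complexity, so Conjecture~\ref{conjeHL} (a theorem here) applies and gives a count of order $O(N^{n^2-2n-1}/\log^{n^2} N)$, one power of $N$ smaller. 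Summing over the $\binom{n^2}{2}$ pairs $(i,j)$, the total number of ``bad'' squares with prime entries is $O_n(N^{n^2-2n-1}/\log^{n^2} N) = o(N^{n^2-2n}/\log^{n^2} N)$.

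Next I would argue that the main term itself does not collapse: there \emph{are} magic squares with all entries distinct, namely the normal magic squares of Lemma~\ref{cuadmagdif}, and more to the point the polytope $K_n(1)$ has a full-dimensional open subset all of whose points $x$ give pairwise-distinct values $\psi_i(x)$ (the locus where some $\psi_i(x)=\psi_j(x)$ is a finite union of hyperplanes, hence measure zero, and $K_n(1)$ has positive volume by the argument preceding the statement). Since the count of magic squares of primes with \emph{distinct} entries equals the count of all magic squares of primes minus the bad ones, and the bad count is $o$ of the main term while the main term is $(1+o(1))\mathfrak{S}_n N^{n^2-2n}/\log^{n^2} N$ with $\mathfrak{S}_n > 0$, the two counts are asymptotically equal.

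One technical point I would want to state cleanly: when we pass to the hyperplane $\{\psi_i = \psi_j\}$, we must produce an honest system of linear forms on $\mathbb{Z}^{n^2-2n-1}$ (a basis for the sublattice, after clearing denominators) and verify it has finite complexity, so that the theorem quoted in the Introduction truly applies. This is routine — a linear subspace of a space carrying a finite-complexity system inherits finite complexity since the complexity is a property of the finite list of forms and can only go down on restriction, and the archimedean and local factors are finite — but it is the one place where care is needed.

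The main obstacle, such as it is, is precisely this bookkeeping: making sure the restricted systems are genuinely nondegenerate (no $\psi_k$ becomes identically zero or becomes a rational multiple of another on the hyperplane) so that the hypotheses of Conjecture~\ref{conjeHL} are met, and confirming the archimedean factor of each restricted system is finite so the bound is $O(N^{n^2-2n-1})$ and not something larger. Everything else is immediate from the results already established: finite complexity for all $n \ge 3$, positivity of $\operatorname{vol}_{n^2-2n}(K_n(1))$ and of $\prod_p \beta_{p,n}$, and the existence of normal magic squares.
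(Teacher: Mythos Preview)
Your overall strategy --- bound the ``bad'' squares by something that is $o$ of the main term --- is correct, but you are working much harder than necessary, and in doing so you have introduced a genuine gap.

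The paper's argument is far simpler: it never invokes Conjecture~\ref{conjeHL} on the restricted system at all. The existence of a normal magic square (Lemma~\ref{cuadmagdif}) shows that each equation $x_i = x_j$ is linearly independent of the $2n$ equations defining $n\times n$ magic squares, so the magic squares satisfying $x_i = x_j$ lie in an $(n^2-2n-1)$-dimensional affine sublattice. The number of such squares with \emph{integer} entries in $[0,N]$ --- forget primality entirely --- is therefore $O_n(N^{n^2-2n-1})$, which is already $o_n(N^{n^2-2n}/\log^{n^2} N)$. Summing over the $\binom{n^2}{2}$ pairs finishes the proof in one line. Your second paragraph (positivity of $\mathfrak{S}_n$, etc.) is then unnecessary: the main term was already shown to be of exact order $N^{n^2-2n}/\log^{n^2} N$ in earlier sections.

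Your route, by contrast, tries to apply Conjecture~\ref{conjeHL} on each hyperplane $\{\psi_i=\psi_j\}$ and asserts that checking the hypotheses there is ``routine'' because ``complexity \ldots\ can only go down on restriction''. This is false. Already for $n=3$: with $\psi_1=a+b$, $\psi_5=a$, $\psi_9=a-b$, the hyperplane $\psi_1=\psi_9$ is $\{b=0\}$, and on it $\psi_1\equiv\psi_5$. Two of the restricted forms now coincide, the hypothesis ``no two of the $\psi_i$ are rational multiples of each other'' fails, the complexity is infinite, and the theorem does not apply. The difficulty you flagged as mere bookkeeping is real and cannot be dismissed. The fix is precisely the paper's move: drop the primality condition on the restricted problem and use the crude lattice-point count $O_n(N^{n^2-2n-1})$, which needs nothing beyond linear algebra.
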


\begin{proof}
In the same way that we proved that the $2n$ equations defining $n \times n$ magic squares were linearly independent exhibiting magic squares that satisfied some of the equations but not the others (see page \pageref{elegante}), the fact that there are $\mathbb{Z}$-magic squares with all their entries different (Lemma \ref{cuadmagdif}) proves that any of the $\binom{n^2}{2}$ equations of the type $x_i - x_{j} = 0$ for $i \in [1,n^2-1]$ and $j \in [i+1, n^2]$ is linearly independent with the $2n$ equations defining $n \times n$ magic squares (see page \pageref{sistlineqA}).

\bigskip

Then, the number of $\mathbb{Z}$-magic squares with their entries being different elements of $[0,N]$ is $O_n(N^{n^2-2n-1})$. This is certainly $o_n(N^{n^2-2n}/{\log ^{n^2} N})$ and then we are done.
\end{proof}

\section*{Acknowledgements}
The author would like to thank Fundaci\'on Ram\'on Areces for the Postdoctoral Grant he enjoys at the Department of Pure Mathematics and Mathematical Statistics of the University of Cambridge. He would also like to thank Javier Cilleruelo for the proposal of the problem, Ben Green for some valuable suggestions and both of them for their encouragement.

\appendix
\section{Vertices of $K_4(1)$} \label{apena}
The 178 vertices of $K_4(1)$ in lexicographic order are:
\begin{footnotesize}
\begin{displaymath}
\begin{array}{llll}
(0, 0, 0, 0, 0, 0, 0, 0) & (0, 0, 0, 1, 0, 1, 0, 1) & (0, 0, 0, 1, 1, 0, 0, 0) & (0, 0, \frac{1}{2}, \frac{1}{2}, 0, \frac{1}{2}, \frac{1}{2}, 1)\\ 
(0, 0, \frac{1}{2}, 1, \frac{1}{2}, \frac{1}{2}, \frac{1}{2}, 1) & (0, 0, 1, 0, 0, \frac{1}{2}, 0, \frac{1}{2}) & (0, 0, 1, 0, 0, 1, 0, 0) & (0, 0, 1, 0, 1, 0, 0, 0)\\ 
(0, 0, 1, \frac{1}{3}, 0, \frac{2}{3}, \frac{1}{3}, 1) & (0, 0, 1, \frac{1}{2}, 0, 1, 0, 1) & (0, 0, 1, \frac{1}{2}, 0, 1, \frac{1}{2}, 1) & (0, 0, 1, \frac{1}{2}, \frac{1}{2}, \frac{1}{2}, \frac{1}{2}, 1)\\ 
(0, 0, 1, 1, 0, 1, 0, 1) & (0, 0, 1, 1, \frac{1}{2}, 1, \frac{1}{2}, 1) & (0, 0, 1, 1, 1, \frac{1}{2}, 0, \frac{1}{2}) & (0, 0, 1, 1, 1, \frac{1}{2}, \frac{1}{2}, 1)\\ 
(0, 0, 1, 1, 1, 1, 0, 0) & (0, 0, 1, 1, 1, 1, 0, 1) & (0, \frac{1}{2}, 0, \frac{1}{2}, 1, 0, 0, 0) & (0, \frac{1}{2}, 0, 1, 1, \frac{1}{2}, 0, \frac{1}{2})\\ 
(0, \frac{1}{2}, 1, 0, 0, \frac{1}{2}, 0, \frac{1}{2}) & (0, \frac{1}{2}, 1, 0, 0, \frac{1}{2}, \frac{1}{2}, 1) & (0, \frac{1}{2}, 1, \frac{1}{2}, 0, 1, 0, 1) & (0, \frac{1}{2}, 1, \frac{1}{2}, 1, 1, 0, 0)\\ 
(0, \frac{1}{2}, 1, 1, 1, \frac{1}{2}, \frac{1}{2}, 1) & (0, 1, 0, 0, 0, 0, 0, 0) & (0, 1, 0, 0, 0, 0, 1, 1) & (0, 1, 0, 0, \frac{1}{2}, 0, \frac{1}{2}, 0)\\ 
(0, 1, 0, \frac{1}{3}, 1, 0, \frac{1}{3}, 0) & (0, 1, 0, \frac{1}{2}, 1, 0, 0, 0) & (0, 1, 0, \frac{1}{2}, 1, 0, \frac{1}{2}, 0) & (0, 1, 0, \frac{1}{2}, 1, 0, \frac{1}{2}, \frac{1}{2})\\ 
(0, 1, 0, 1, 0, 1, 0, 1) & (0, 1, 0, 1, \frac{1}{2}, 1, \frac{1}{2}, 1) & (0, 1, 0, 1, 1, 0, 1, 1) & (0, 1, 0, 1, 1, \frac{1}{2}, 0, \frac{1}{2})\\ 
(0, 1, 0, 1, 1, \frac{1}{2}, \frac{1}{2}, 1) & (0, 1, 0, 1, 1, 1, 0, 0) & (0, 1, \frac{1}{2}, 0, \frac{1}{2}, \frac{1}{2}, \frac{1}{2}, 0) & (0, 1, \frac{1}{2}, 0, 1, 0, \frac{1}{2}, 0)\\ 
(0, 1, \frac{1}{2}, \frac{1}{2}, 0, \frac{1}{2}, \frac{1}{2}, 1) & (0, 1, \frac{1}{2}, \frac{1}{2}, 1, \frac{1}{2}, \frac{1}{2}, 0) & (0, 1, \frac{1}{2}, 1, 1, 1, \frac{1}{2}, 1) & (0, 1, 1, 0, 0, \frac{1}{2}, \frac{1}{2}, 1)\\ 
(0, 1, 1, 0, 0, 1, 1, 1) & (0, 1, 1, 0, 1, 0, 0, 0) & (0, 1, 1, 0, 1, \frac{1}{2}, \frac{1}{2}, 0) & (0, 1, 1, \frac{1}{2}, \frac{1}{2}, \frac{1}{2}, \frac{1}{2}, 1)\\ 
(0, 1, 1, \frac{1}{2}, 1, 1, \frac{1}{2}, \frac{1}{2}) & (0, 1, 1, 1, 1, 1, 0, 1) & (0, 1, 1, 1, 1, 1, 1, 1) & (\frac{1}{3}, 0, \frac{1}{3}, \frac{2}{3}, 0, \frac{1}{3}, \frac{2}{3}, 1)\\ 
(\frac{1}{3}, 0, 1, 0, 0, \frac{1}{3}, 0, \frac{1}{3}) & (\frac{1}{3}, \frac{1}{3}, 0, \frac{2}{3}, 1, \frac{1}{3}, 0, 0) & (\frac{1}{3}, \frac{2}{3}, 1, \frac{2}{3}, \frac{2}{3}, \frac{1}{3}, \frac{2}{3}, 1) & (\frac{1}{3}, 1, 0, 0, \frac{1}{3}, \frac{1}{3}, \frac{2}{3}, 0)\\ 
(\frac{1}{3}, 1, \frac{2}{3}, \frac{2}{3}, 1, 1, \frac{2}{3}, \frac{2}{3}) & (\frac{1}{2}, 0, 0, \frac{1}{2}, 0, 0, \frac{1}{2}, \frac{1}{2}) & (\frac{1}{2}, 0, 0, \frac{1}{2}, \frac{1}{2}, \frac{1}{2}, 0, 0) & (\frac{1}{2}, 0, 0, 1, 0, \frac{1}{2}, \frac{1}{2}, 1)\\ 
(\frac{1}{2}, 0, 0, 1, 1, \frac{1}{2}, 0, 0) & (\frac{1}{2}, 0, \frac{1}{2}, 0, 0, 0, 0, 0) & (\frac{1}{2}, 0, \frac{1}{2}, \frac{1}{2}, 0, \frac{1}{2}, 1, 1) & (\frac{1}{2}, 0, \frac{1}{2}, \frac{1}{2}, \frac{1}{2}, 0, 0, 0)\\ 
(\frac{1}{2}, 0, \frac{1}{2}, \frac{1}{2}, \frac{1}{2}, 1, 0, 0) & (\frac{1}{2}, 0, \frac{1}{2}, \frac{1}{2}, 1, \frac{1}{2}, 0, 0) & (\frac{1}{2}, 0, \frac{1}{2}, 1, 0, 1, 0, 1) & (\frac{1}{2}, 0, \frac{1}{2}, 1, 1, 1, 0, 0)\\ 
(\frac{1}{2}, 0, 1, 0, 0, \frac{1}{2}, 0, 0) & (\frac{1}{2}, 0, 1, 0, 0, \frac{1}{2}, 0, \frac{1}{2}) & (\frac{1}{2}, 0, 1, 0, \frac{1}{2}, 0, 0, 0) & (\frac{1}{2}, 0, 1, \frac{1}{2}, 0, 1, 0, 1)\\ 
(\frac{1}{2}, 0, 1, \frac{1}{2}, 0, 1, \frac{1}{2}, \frac{1}{2}) & (\frac{1}{2}, 0, 1, \frac{1}{2}, 0, 1, 1, 1) & (\frac{1}{2}, 0, 1, \frac{1}{2}, \frac{1}{2}, \frac{1}{2}, 0, 0) & (\frac{1}{2}, 0, 1, \frac{1}{2}, \frac{1}{2}, \frac{1}{2}, 1, 1)\\ 
(\frac{1}{2}, 0, 1, \frac{1}{2}, 1, 0, 0, 0) & (\frac{1}{2}, 0, 1, \frac{1}{2}, 1, 0, \frac{1}{2}, \frac{1}{2}) & (\frac{1}{2}, 0, 1, \frac{1}{2}, 1, 1, 0, 0) & (\frac{1}{2}, 0, 1, 1, \frac{1}{2}, 1, 0, 1)\\ 
(\frac{1}{2}, 0, 1, 1, 1, \frac{1}{2}, 0, \frac{1}{2}) & (\frac{1}{2}, 0, 1, 1, 1, \frac{1}{2}, \frac{1}{2}, 1) & (\frac{1}{2}, \frac{1}{2}, 0, 0, 0, \frac{1}{2}, \frac{1}{2}, 0) & (\frac{1}{2}, \frac{1}{2}, 0, \frac{1}{2}, 0, 0, \frac{1}{2}, \frac{1}{2})\\ 
(\frac{1}{2}, \frac{1}{2}, 0, \frac{1}{2}, 0, 0, 1, 1) & (\frac{1}{2}, \frac{1}{2}, 0, \frac{1}{2}, 0, 1, \frac{1}{2}, \frac{1}{2}) & (\frac{1}{2}, \frac{1}{2}, 0, \frac{1}{2}, 1, 0, 0, 0) & (\frac{1}{2}, \frac{1}{2}, 0, 1, 0, \frac{1}{2}, \frac{1}{2}, 1)\\ 
(\frac{1}{2}, \frac{1}{2}, 0, 1, 1, \frac{1}{2}, \frac{1}{2}, 0) & (\frac{1}{2}, \frac{1}{2}, 1, 0, 0, \frac{1}{2}, \frac{1}{2}, 1) & (\frac{1}{2}, \frac{1}{2}, 1, 0, 1, \frac{1}{2}, \frac{1}{2}, 0) & (\frac{1}{2}, \frac{1}{2}, 1, \frac{1}{2}, 0, 1, 1, 1)\\ 
(\frac{1}{2}, \frac{1}{2}, 1, \frac{1}{2}, 1, 0, \frac{1}{2}, \frac{1}{2}) & (\frac{1}{2}, \frac{1}{2}, 1, \frac{1}{2}, 1, 1, 0, 0) & (\frac{1}{2}, \frac{1}{2}, 1, \frac{1}{2}, 1, 1, \frac{1}{2}, \frac{1}{2}) & (\frac{1}{2}, \frac{1}{2}, 1, 1, 1, \frac{1}{2}, \frac{1}{2}, 1)\\ 
(\frac{1}{2}, 1, 0, 0, 0, \frac{1}{2}, \frac{1}{2}, 0) & (\frac{1}{2}, 1, 0, 0, 0, \frac{1}{2}, 1, \frac{1}{2}) & (\frac{1}{2}, 1, 0, 0, \frac{1}{2}, 0, 1, 0) & (\frac{1}{2}, 1, 0, \frac{1}{2}, 0, 0, 1, 1)\\ 
(\frac{1}{2}, 1, 0, \frac{1}{2}, 0, 1, \frac{1}{2}, \frac{1}{2}) & (\frac{1}{2}, 1, 0, \frac{1}{2}, 0, 1, 1, 1) & (\frac{1}{2}, 1, 0, \frac{1}{2}, \frac{1}{2}, \frac{1}{2}, 0, 0) & (\frac{1}{2}, 1, 0, \frac{1}{2}, \frac{1}{2}, \frac{1}{2}, 1, 1)\\ 
(\frac{1}{2}, 1, 0, \frac{1}{2}, 1, 0, 0, 0) & (\frac{1}{2}, 1, 0, \frac{1}{2}, 1, 0, \frac{1}{2}, \frac{1}{2}) & (\frac{1}{2}, 1, 0, \frac{1}{2}, 1, 0, 1, 0) & (\frac{1}{2}, 1, 0, 1, \frac{1}{2}, 1, 1, 1)\\ 
(\frac{1}{2}, 1, 0, 1, 1, \frac{1}{2}, 1, \frac{1}{2}) & (\frac{1}{2}, 1, 0, 1, 1, \frac{1}{2}, 1, 1) & (\frac{1}{2}, 1, \frac{1}{2}, 0, 0, 0, 1, 1) & (\frac{1}{2}, 1, \frac{1}{2}, 0, 1, 0, 1, 0)\\ 
(\frac{1}{2}, 1, \frac{1}{2}, \frac{1}{2}, 0, \frac{1}{2}, 1, 1) & (\frac{1}{2}, 1, \frac{1}{2}, \frac{1}{2}, \frac{1}{2}, 0, 1, 1) & (\frac{1}{2}, 1, \frac{1}{2}, \frac{1}{2}, \frac{1}{2}, 1, 1, 1) & (\frac{1}{2}, 1, \frac{1}{2}, \frac{1}{2}, 1, \frac{1}{2}, 0, 0)\\ 
(\frac{1}{2}, 1, \frac{1}{2}, 1, 1, 1, 1, 1) & (\frac{1}{2}, 1, 1, 0, 0, \frac{1}{2}, 1, 1) & (\frac{1}{2}, 1, 1, 0, 1, \frac{1}{2}, \frac{1}{2}, 0) & (\frac{1}{2}, 1, 1, \frac{1}{2}, \frac{1}{2}, \frac{1}{2}, 1, 1)\\ 
(\frac{1}{2}, 1, 1, \frac{1}{2}, 1, 1, \frac{1}{2}, \frac{1}{2}) & (\frac{2}{3}, 0, \frac{1}{3}, \frac{1}{3}, 0, 0, \frac{1}{3}, \frac{1}{3}) & (\frac{2}{3}, 0, 1, 1, \frac{2}{3}, \frac{2}{3}, \frac{1}{3}, 1) & (\frac{2}{3}, \frac{1}{3}, 0, \frac{1}{3}, \frac{1}{3}, \frac{2}{3}, \frac{1}{3}, 0)\\ 
(\frac{2}{3}, \frac{2}{3}, 1, \frac{1}{3}, 0, \frac{2}{3}, 1, 1) & (\frac{2}{3}, 1, 0, 1, 1, \frac{2}{3}, 1, \frac{2}{3}) & (\frac{2}{3}, 1, \frac{2}{3}, \frac{1}{3}, 1, \frac{2}{3}, \frac{1}{3}, 0) & (1, 0, 0, 0, 0, 0, 0, 0)\\ 
(1, 0, 0, 0, 0, 0, 1, 0) & (1, 0, 0, \frac{1}{2}, 0, 0, \frac{1}{2}, \frac{1}{2}) & (1, 0, 0, \frac{1}{2}, \frac{1}{2}, \frac{1}{2}, \frac{1}{2}, 0) & (1, 0, 0, 1, 0, \frac{1}{2}, \frac{1}{2}, 1)\\ 
(1, 0, 0, 1, 0, 1, 1, 1) & (1, 0, 0, 1, 1, 0, 0, 0) & (1, 0, 0, 1, 1, \frac{1}{2}, \frac{1}{2}, 0) & (1, 0, \frac{1}{2}, 0, 0, 0, \frac{1}{2}, 0)\\
(1, 0, \frac{1}{2}, \frac{1}{2}, 0, \frac{1}{2}, \frac{1}{2}, 1) & (1, 0, \frac{1}{2}, \frac{1}{2}, 1, \frac{1}{2}, \frac{1}{2}, 0) & (1, 0, \frac{1}{2}, 1, 0, 1, \frac{1}{2}, 1) & (1, 0, \frac{1}{2}, 1, \frac{1}{2}, \frac{1}{2}, \frac{1}{2}, 1)\\
(1, 0, 1, 0, 0, 0, 1, 1) & (1, 0, 1, 0, 0, \frac{1}{2}, \frac{1}{2}, 0) & (1, 0, 1, 0, 0, \frac{1}{2}, 1, \frac{1}{2}) & (1, 0, 1, 0, 0, 1, 0, 0)\\ 
(1, 0, 1, 0, \frac{1}{2}, 0, \frac{1}{2}, 0) & (1, 0, 1, 0, 1, 0, 1, 0) & (1, 0, 1, \frac{1}{2}, 0, 1, \frac{1}{2}, \frac{1}{2}) & (1, 0, 1, \frac{1}{2}, 0, 1, \frac{1}{2}, 1)\\ 
(1, 0, 1, \frac{1}{2}, 0, 1, 1, 1) & (1, 0, 1, \frac{2}{3}, 0, 1, \frac{2}{3}, 1) & (1, 0, 1, 1, \frac{1}{2}, 1, \frac{1}{2}, 1) & (1, 0, 1, 1, 1, 1, 0, 0)\\ 
(1, 0, 1, 1, 1, 1, 1, 1) & (1, \frac{1}{2}, 0, 0, 0, \frac{1}{2}, \frac{1}{2}, 0) & (1, \frac{1}{2}, 0, \frac{1}{2}, 0, 0, 1, 1) & (1, \frac{1}{2}, 0, \frac{1}{2}, 1, 0, 1, 0)
\end{array}
\end{displaymath}

\begin{displaymath}
\begin{array}{llll}
(1, \frac{1}{2}, 0, 1, 1, \frac{1}{2}, \frac{1}{2}, 0) & (1, \frac{1}{2}, 0, 1, 1, \frac{1}{2}, 1, \frac{1}{2}) & (1, \frac{1}{2}, 1, 0, 0, \frac{1}{2}, 1, \frac{1}{2}) & (1, \frac{1}{2}, 1, \frac{1}{2}, 0, 1, 1, 1)\\ 
(1, 1, 0, 0, 0, 0, 1, 0) & (1, 1, 0, 0, 0, 0, 1, 1) & (1, 1, 0, 0, 0, \frac{1}{2}, \frac{1}{2}, 0) & (1, 1, 0, 0, 0, \frac{1}{2}, 1, \frac{1}{2})\\ 
(1, 1, 0, 0, \frac{1}{2}, 0, \frac{1}{2}, 0) & (1, 1, 0, 0, 1, 0, 1, 0) & (1, 1, 0, \frac{1}{2}, \frac{1}{2}, \frac{1}{2}, \frac{1}{2}, 0) & (1, 1, 0, \frac{1}{2}, 1, 0, \frac{1}{2}, 0)\\ 
(1, 1, 0, \frac{1}{2}, 1, 0, 1, 0) & (1, 1, 0, \frac{2}{3}, 1, \frac{1}{3}, \frac{2}{3}, 0) & (1, 1, 0, 1, 0, 1, 1, 1) & (1, 1, 0, 1, 1, 0, 1, 1)\\ 
(1, 1, 0, 1, 1, \frac{1}{2}, 1, \frac{1}{2}) & (1, 1, \frac{1}{2}, 0, \frac{1}{2}, \frac{1}{2}, \frac{1}{2}, 0) & (1, 1, \frac{1}{2}, \frac{1}{2}, 1, \frac{1}{2}, \frac{1}{2}, 0) & (1, 1, 1, 0, 0, 1, 1, 1)\\ 
(1, 1, 1, 0, 1, 0, 1, 0) & (1, 1, 1, 1, 1, 1, 1, 1) & &
\end{array}
\end{displaymath}
\end{footnotesize}

These vertices were obtained with the algorithm described in Section \ref{vol4x4}. Exactly the same vertices are obtained using \url{cddlib}, the implementation of the double description method of Motzkin, Raiffa, Thompson and Thrall (Copyright by Komei Fukuda, \url{http://www.ifor.math.ethz.ch/~fukuda/cdd_home/cdd.html}).

\section{54 values of $E_4(N)$} \label{apenb}
Recall that $E_4(N)$ is a quasipolynomial giving the exact number of integer points that satisfy $0 \le \psi_i(x) \le N$ for $i = 1, \ldots, 16$, where the coefficients of $\psi_i(x)$ are given by the $i$-th row of the matrix of Figure \ref{matele4}. Since the degree of $E_4(N)$ is 8 and its period divides 6, we need 54 values of $E_4(N)$ in order to interpolate it.

\bigskip

Computing each one of these values essentially consists of checking for each one of the integer points in $[0,N]^8$ if it satisfies the 16 inequalities given by the eight nontrivial linear forms being between 0 and $N$, including both of them. It is true that we can be clever and reduce considerably the number of checkings, but this gives us an idea of the order of magnitude of the number of checkings needed. With this in mind, it would be nice if we could reduce the number of values of $E_4(N)$ needed to interpolate it. The next fundamental result (see, for example, \cite{BS} or \cite{MM}) allow us to reduce significantly that number of values, from 54 to roughly the first half of them.

\begin{theorem} \label{ehrmac}
\emph{\textbf{(Ehrhart-Macdonald Reciprocity Law)}} Suppose that $P$ is a convex polytope whose vertices have rational coordinates. Then the evaluation of its Ehrhart quasipolynomial, $E(N)$, at negative integers yields
$$E(-N) = (-1)^{\text{dim} P} E^{\circ}(N),$$
where $E^{\circ}(N)$ is the number of integer points in the interior of the dilation $NP$.
\end{theorem}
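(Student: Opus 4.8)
The plan is to pass from the polytope to the cone over it and deduce the identity from the reciprocity theorem for rational polyhedral cones (Stanley reciprocity), then translate the resulting functional equation on generating functions back into the pointwise statement. Assume without loss of generality that $P$ is full-dimensional, so $\dim P = d$; the lower-dimensional case follows by working inside the rational affine hull of $P$ and rescaling the lattice accordingly. First I would homogenize: embed $P$ at height one in $\mathbb{R}^{d+1}$ and form the rational cone $K = \{\lambda(x,1) : \lambda \ge 0,\ x \in P\}$. Lattice points of $K$ whose last coordinate equals $N$ are in bijection with lattice points of $NP$, so the Ehrhart series $\mathrm{Ehr}_P(t) := \sum_{N \ge 0} E(N)\, t^N$ is obtained from the integer-point transform $\sigma_K(z) = \sum_{m \in K \cap \mathbb{Z}^{d+1}} z^m$ by setting the first $d$ variables to $1$ and the last to $t$. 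Likewise, lattice points in the relative interior $K^{\circ}$ at height $N \ge 1$ correspond to lattice points in the interior of $NP$, so the same specialization of $\sigma_{K^{\circ}}$ produces $\sum_{N \ge 1} E^{\circ}(N)\, t^N$.

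The key algebraic input is Stanley reciprocity for cones: for a pointed rational cone $K$, the transforms $\sigma_K$ and $\sigma_{K^{\circ}}$ are rational functions satisfying $\sigma_K(1/z) = (-1)^{\dim K}\, \sigma_{K^{\circ}}(z)$ as rational functions. I would prove this first for a \emph{simplicial} cone by writing its lattice points as the disjoint union of a half-open fundamental parallelepiped translated by the free nonnegative combinations of the primitive generators; the interior cone admits the analogous decomposition with the opposite faces of the parallelepiped made open, and comparing the two finite parallelepiped sums yields both the sign $(-1)^{\dim K}$ and the substitution $z \mapsto 1/z$. The general case then follows by triangulating $K$ into simplicial cones sharing lower-dimensional faces and applying inclusion--exclusion, the boundary contributions cancelling so that the reciprocity sign is governed only by the top-dimensional pieces.

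Specializing the cone identity with $z_1 = \dots = z_d = 1$ and $z_{d+1} = t$, and noting $\dim K = d+1$, gives the functional equation $\mathrm{Ehr}_P(1/t) = (-1)^{d+1}\, \mathrm{Ehr}_{P^{\circ}}(t)$. To convert this into the pointwise statement I would invoke the standard reflection lemma for rational generating functions of quasipolynomials: since $E$ is a quasipolynomial (guaranteed by Ehrhart's Theorem, already available) the series $\mathrm{Ehr}_P$ is rational and $\sum_{N \ge 1} E(-N)\, t^N = -\,\mathrm{Ehr}_P(1/t)$, where $E(-N)$ denotes the quasipolynomial evaluated at $-N$. Combining the two,
\begin{displaymath}
\sum_{N \ge 1} E(-N)\, t^N = -\,\mathrm{Ehr}_P(1/t) = -(-1)^{d+1}\,\mathrm{Ehr}_{P^{\circ}}(t) = (-1)^{d} \sum_{N \ge 1} E^{\circ}(N)\, t^N,
\end{displaymath}
and matching coefficients of $t^N$ yields $E(-N) = (-1)^{\dim P} E^{\circ}(N)$ for every positive integer $N$.

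I expect the main obstacle to be establishing Stanley reciprocity for cones with full rigor: the bookkeeping of half-open parallelepipeds (which faces to include so that translated copies tile a simplicial cone without overlap, together with the matching opposite-open decomposition for the interior) is delicate, and the passage to non-simplicial cones by triangulation requires a careful inclusion--exclusion argument showing that the shared boundary faces cancel exactly. A secondary technical point is the justification of the reflection lemma uniformly across the residue classes of $N$ modulo the period, so that the periodic lower-order coefficients of the quasipolynomial, and not merely the leading term, obey the identity; this is precisely where the rationality of $\mathrm{Ehr}_P$ and the quasipolynomial structure furnished by Ehrhart's Theorem are essential.
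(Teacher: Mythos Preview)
The paper does not actually prove this theorem: Theorem~\ref{ehrmac} is stated as a known result with the parenthetical ``see, for example, \cite{BS} or \cite{MM}'' and is then used as a black box to halve the number of values needed to interpolate $E_4$. So there is no proof in the paper to compare your proposal against.

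That said, your outline is the standard argument and is essentially the one given in the Beck--Robins reference \cite{BS} that the paper cites: pass to the cone over $P$, establish Stanley reciprocity $\sigma_K(1/z) = (-1)^{\dim K}\sigma_{K^{\circ}}(z)$ for simplicial cones via the half-open parallelepiped decomposition, extend to general cones by triangulation, specialize to obtain the functional equation for the Ehrhart series, and read off coefficients using the reflection lemma for rational generating functions of quasipolynomials. The obstacles you flag (the half-open bookkeeping and the cancellation of boundary faces under inclusion--exclusion) are real but are handled in \cite{BS}, so your plan would go through. In short: your proposal is correct and supplies exactly the proof the paper chose to outsource to the literature.
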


How to obtain the values of $E^{\circ}$ from the values of $E$? $E^{\circ}(N)$ is the number of points with integer coordinates in the interior of the polytope $K_n(N) = \{x \in \mathbb{R}^{n^2-2n} : 0 \le \psi_i(x) \le N \text{ for } i = 1, \ldots, n^2\}$. A point is in the interior of $K_n(N)$ if and only if it satisfies those $2n^2$ inequalities but none of the corresponding equalities, so $K_n^{\circ}(N) = \{x \in \mathbb{R}^{n^2-2n} : 0 < \psi_i(x) < N \text{ for } i = 1, \ldots, n^2\} = \{x \in \mathbb{R}^{n^2-2n} : 1 \le \psi_i(x) \le N-1 \text{ for } i = 1, \ldots, n^2\}$. 

\bigskip
Because of the property that for every $i = 1, \ldots, n^2$ the coefficients of $\psi_i$ add up to 1 (see Lemma \ref{argumentosuma1}), substracting 1 to all of the coordinates of the elements of $K_n^{\circ}(N)$ gives a biyection between this set and $\{x \in \mathbb{R}^{n^2-2n} : 0 \le \psi_i(x) \le N-2 \text{ for } i = 1, \ldots, n^2\} = K_{n}(N-2)$ for $N \ge 2$. Then, Theorem \ref{ehrmac} implies that in our case
$$E_4(-N) = E_4(N-2) \text{ for } N \ge 2.$$

\bigskip

This, and a little program checking the number of points $x \in [0,N]^8$ that satisfy $0 \le \psi_i(x) \le N$ for the $2n$ nontrivial linear forms and $N \in [0,26]$, together with the fact that $E^{\circ}(1) = 0$, gives:

\begin{displaymath}
\begin{array}{ll}
E_4(-1) = 0 & \quad E_4(0) = 1 = E_4(-2) \\
E_4(1) = 34 = E_4(-3) & \quad E_4(2) = 621 = E_4(-4) \\
E_4(3) = 5400 = E_4(-5) & \quad E_4(4) = 30277 = E_4(-6) \\
E_4(5) = 125794 = E_4(-7) & \quad E_4(6) = 423097 = E_4(-8) \\
E_4(7) = 1214992 = E_4(-9) & \quad E_4(8) = 3089369 = E_4(-10) \\
E_4(9) = 7130034 = E_4(-11) & \quad E_4(10) = 15210869 = E_4(-12) \\
E_4(11) = 30399592 = E_4(-13) & \quad E_4(12) = 57508653 = E_4(-14) \\
E_4(13) = 103807042 = E_4(-15) & \quad E_4(14) = 179946753 = E_4(-16) \\
E_4(15) = 301109616 = E_4(-17) & \quad E_4(16) = 488451089 = E_4(-18) \\
E_4(17) = 770830866 = E_4(-19) & \quad E_4(18) = 1186938765 = E_4(-20) \\
E_4(19) = 1787779544 = E_4(-21) & \quad E_4(20) = 2639668773 = E_4(-22) \\
E_4(21) = 3827663858 = E_4(-23) & \quad E_4(22) = 5459641001 = E_4(-24) \\
E_4(23) = 7670885920 = E_4(-25) & \quad E_4(24) = 10629486297 = E_4(-26) \\
E_4(25) = 14542317074 = E_4(-27) & \quad E_4(26) = 19662006197
\end{array}
\end{displaymath}

Although in this case we can directly calculate with a computer the 54 values $E_4(0), \ldots, E_{53}(0)$ --and we have done so!--, this takes around 217 times the time it takes to calculate the above values. This explicitly shows the ``power'' of Ehrhart-Macdonald Reciprocity Law.

\end{document}